\newcommand{\stkout}[1]{\ifmmode\text{\sout{\ensuremath{#1}}}\else\sout{#1}\fi}
\long\def\delete#1{}
\newtheorem{theorem}{Theorem}[section]
\newtheorem{lemma}[theorem]{Lemma}
\newtheorem{proposition}[theorem]{Proposition}
\newtheorem{prop}[theorem]{Proposition}
\newtheorem{remark}[theorem]{Remark}
\newtheorem{conjecture}[theorem]{Conjecture}
\newtheorem{definition}[theorem]{Definition}
\newtheorem{defn}[theorem]{Definition}
\newtheorem{stat}[theorem]{Statement}
\newtheorem{corollary}[theorem]{Corollary}
\newtheorem{claim}{Claim}
\newcommand{\abs}[1]{\lvert#1\rvert}
\newcommand{\Sp}{\mathrm{Sp}\,}
\newcommand{\supp}{\mathrm{supp}\,}
\newcommand{\jm}{{\bf r}}
\newcommand{\ip}{{\bf i}}
\DeclareMathOperator{\Gal}{Gal}
\DeclareMathOperator{\rec}{rec}
\DeclareMathOperator{\SL}{SL}
\DeclareMathOperator{\Irr}{Irr}
\DeclareMathOperator{\Cusp}{Cusp}
\def\IrrE{\Irr_E}
\def\CuspE{\Cusp_E}
\def\Res{{\rm Res}}
\def\GL{{\rm GL}}
\def\rU{\mathrm{U}}
\def\fJ{\mathfrak{J}}
\def\fmm{{\mathfrak{m}}}
\def\bfU{\mathbf{U}}
\def\Alg{\Pi}
\def\Hom{{\rm Hom}}
\newcommand{\la}{{\lambda}}
\newcommand{\cD}{{\mathcal D}}
\newcommand{\cO}{{\mathcal O}}
\newcommand{\Z}{{\mathbb Z}}
\newcommand{\bC}{\mathbb{C}}
\newcommand{\gotM}{\mathfrak{m}}
\title [On two questions concerning distinction]{On two questions concerning representations distinguished by the Galois involution}
\author{Maxim Gurevich}
\address{Department of Mathematics, Weizmann Institute of Science, Rehovot 7610001, Israel}
\email{max.gurevich@weizmann.ac.il}
\author{Jia-Jun Ma}
\address{Department of Mathematics, National University of Singapore, Singapore-119076}
\email{matmjj@nus.edu.sg}
\author{Arnab Mitra}
\address{Department of Mathematics, Technion - Israel Institute of Technology , Haifa 3200003, Israel}
\email{00.arnab.mitra@gmail.com}
\thanks{Maxim Gurevich, partially supported by the ISF grant 756/12, and ERC StG grant 637912.}
\thanks{Arnab Mitra, partially supported by postdoctoral fellowships funded by the Skirball Foundation via the Center for Advanced Studies in Mathematics at Ben-Gurion University of the Negev and the Department of Mathematics, Technion.}
\date{\today}
\numberwithin{equation}{section}
\begin{document}

\setcounter{tocdepth}{1}
\date{\today}
\subjclass[2010]{Primary 22E50, Secondary 11F70}

\begin{abstract}
Let $E/F$ be a quadratic extension of non-archimedean local fields of characteristic $0$. In this paper, we investigate two approaches which attempt to describe the irreducible smooth representations of $\GL_n(E)$ that are distinguished by its subgroup $\GL_n(F)$. One relates this class to representations which come as base change lifts from a quasi-split unitary group $F$, while another deals with a certain symmetry condition. By characterizing the union of images of the base change maps we show that these two approaches are closely related. Using this observation, we are able to prove a statement relating base change and distinction for ladder representations. We then produce a wide family of examples in which the symmetry condition does not impose $\GL_n(F)$-distinction, and thus exhibit the limitations of these two approaches.
\end{abstract}

\maketitle
\tableofcontents
\section{Introduction}

Let $F$ be a non-archimedean local field of characteristic
  0. Let $G$ be the group of $F$-points of a reductive linear algebraic
  group. Let $H<G$ be a closed subgroup. A smooth, $\bC$-representation $(\pi,V)$ of $G$ is called $H$-distinguished if there exists a
  non-zero linear functional $\ell$ on $V$ such that
  $\ell(\pi(h)v)=\ell(v)$ for all $h\in H$ and $v\in V$.

  The class of distinguished representations plays a central role in harmonic
  analysis of homogeneous spaces. Furthermore, distinguished representations
  were shown to be crucial for the global theory of period integrals of
  automorphic forms, and have applications to the study of special values of
  $L$-functions.

  This is part of the motivation for a systematic study of $H$-distinguished
  representations of $G$, for a given pair $(G,H)$ of interest.  In particular,
  a highly desirable goal is to obtain a precise classification of
  $H$-distinguished irreducible representations of $G$.

  Applications as mentioned above are often more relevant in cases when the
  homogeneous space $G/H$ is symmetric. In other words, $H$ is a subgroup of
  fixed points of an involution on $G$. In this article we will consider the
  following specific family of symmetric pairs. Let $E$ be a quadratic extension
  of $F$, and let $\tau\in {\rm Gal}(E/F)$ be the non-trivial involution. We identify $\GL_n(F)$ with the subgroup of $\GL_n(E)$ fixed by
  $\tau$.  

\subsection{Conjectures}\label{sec:intro.conj}

Denote by $\omega$ the quadratic character of $F^{\times}$ whose kernel is the
image of the norm map from $E^{\times}$ to $F^{\times}$. We fix a character $\chi_{-1}$ of $E^\times$ extending $\omega$ from now on. Let $\pi^{\tau}$ be the representation of $\GL_{n}(E)$ given by
\begin{equation}\label{eq:pi_tau}
\pi^{\tau}(g)=\pi(\tau(g))
\end{equation}
for $g \in \GL_{n}(E)$.

\subsubsection{} One perspective attempts to characterize irreducible distinguished
representations in this case in terms of a certain symmetry condition. Let us
write here one version of a precise statement of this perspective, namely, the
version which appeared in \cite{K}.

\begin{stat}\label{jac}
  Suppose that $\pi$ is an irreducible admissible representation of
  $\GL_{n}(E)$ such that the central character of $\pi$ is trivial on
  $F^{\times}$.
\begin{enumerate}
\item Suppose $n$ is odd. Then $\pi^{\vee}=\pi^{\tau}$ if and only if $\pi$ is $\GL_{n}(F)$-distinguished.
\item Suppose $n$ is even. Then $\pi^{\vee}=\pi^{\tau}$ if and only if either $\pi$ or $(\chi_{-1}) \pi$ is $\GL_{n}(F)$-distinguished.
\end{enumerate}
\end{stat}

There have been works proving the statement for several classes of
representations in the past. The reader may see, for example \cite{P},
\cite{HM}, \cite{K}, \cite{M}, \cite{M1}. However \Cref{jac} is incorrect in general. For a counterexample, consider the irreducible representation ${\rm Ind}^{{\GL}_{3}(E)}_{P}(1\otimes (\chi_{-1})\sigma)$ where $1$ is the trivial character of $E^{\times}$, $\sigma$ is an irreducible $\GL_{2}(F)$-distinguished cuspidal representation of $\GL_{2}(E)$, and $P$ is
the standard parabolic subgroup of $\GL_{3}(E)$ with Levi subgroup $\GL_{1}(E)\times \GL_{2}(E)$.

We call an irreducible representation of $\GL_{n}(E)$ {\it rigid} if its
cuspidal support is contained in a single cuspidal line (see \S \ref{def:cusp_supp}). Motivated by the above example, let us restrict ourselves to the class
of rigid representations and consider a weaker version of the above statement
(as formulated in \cite{G}):
\begin{conjecture}\label{jac2}
Suppose that $\pi$ is a rigid representation of $\GL_{n}(E)$. Then $\pi^{\vee}=\pi^{\tau}$ if and only if at least one of $\pi$ and $\chi_{-1}\pi$ is $\GL_{n}(F)$-distinguished.
\end{conjecture}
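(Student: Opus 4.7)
The plan is to treat the two directions separately. The \emph{forward direction}---that $\GL_n(F)$-distinction of either $\pi$ or $\chi_{-1}\pi$ implies $\pi^{\vee}\cong\pi^{\tau}$---is formal and well known: given a $\GL_n(F)$-invariant linear form on $\pi'$, composing with the anti-involution $g\mapsto\tau({}^{t}g^{-1})$ yields an isomorphism $(\pi')^{\vee}\cong(\pi')^{\tau}$. Applying this to $\pi'=\chi_{-1}\pi$ and using $\chi_{-1}^{\tau}=\chi_{-1}^{-1}$ (which holds because $\chi_{-1}|_{F^{\times}}=\omega$ is trivial on norms) returns $\pi^{\vee}\cong\pi^{\tau}$. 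So only the converse requires work.

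For the reverse direction I would route everything through stable base change from the two quasi-split unitary $F$-forms $U_n^{\pm}$. The strategy has three steps. First, characterize the union $\mathcal{B}_n\subset\Irr(\GL_n(E))$ of the images of the base change maps from $U_n^{\pm}(F)$ as precisely the set of irreducible $\pi$ satisfying the symmetry $\pi^{\vee}\cong\pi^{\tau}$, modified by a $\chi_{-1}$-twist in the even-$n$ case; this is the paper's ``union of images'' observation. Second, invoke the ``base change implies distinction'' direction to conclude that any $\pi\in\mathcal{B}_n$ has $\pi$ or $\chi_{-1}\pi$ $\GL_n(F)$-distinguished. Third, and most importantly, show that every rigid $\pi$ with $\pi^{\vee}\cong\pi^{\tau}$ already lies in $\mathcal{B}_n$.

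The third step is the expected main obstacle. The counter-example to \Cref{jac} recalled above shows that without rigidity the symmetry condition is strictly weaker than membership in $\mathcal{B}_n$, so the argument must genuinely use that all segments of the Zelevinsky multisegment $\mathfrak{m}=\{\Delta_1,\dots,\Delta_r\}$ of $\pi$ lie on a single cuspidal line $\rho\nu^{\mathbb{Z}}$. Under this assumption, both the duality $\pi\mapsto\pi^{\vee}$ and the Galois twist $\pi\mapsto\pi^{\tau}$ translate into explicit combinatorial involutions on $\mathfrak{m}$, so the symmetry becomes a purely combinatorial symmetry of the multisegment. For ladder representations this symmetry is restrictive enough to match directly with a Langlands parameter on the unitary side, yielding $\pi\in\mathcal{B}_n$ explicitly; this is the case I would expect to settle completely with existing tools. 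For general rigid $\pi$, however, the combinatorial symmetry admits many non-ladder solutions, and constructing a compatible source on $U_n^{\pm}(F)$ appears to require additional input---most plausibly a refinement of endoscopic transfer together with Jacquet-module and Bernstein--Zelevinsky derivative techniques adapted to non-ladder rigid multisegments---which is why the general case is left here as a conjecture rather than a theorem.
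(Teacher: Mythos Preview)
The statement you are trying to prove is a \emph{conjecture}, and the paper actually \emph{disproves} it: the remark following \Cref{conv_her_prop} exhibits rigid conjugate self-dual representations (the simplest being $L([\nu^{-1/2}],[\nu^{1/2}])\times L([\nu^{-1/2},\nu^{1/2}])$) that are neither $H$-distinguished nor $(H,\omega)$-distinguished, and \S\ref{sec_imp_ce} gives an imprimitive example as well. So no proof of the reverse implication can exist, and your proposal must contain a genuine gap.

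The gap is in your Step~2, not your Step~3. You have the difficulty located in the wrong place. The assertion that every rigid $\pi$ with $\pi^\vee\cong\pi^\tau$ lies in the union $\mathcal{B}_n$ of the images of the two base change maps is \emph{true} and is exactly what the paper proves as \Cref{equiv} (your Steps~1 and~3 are in fact the same statement, and both are settled by a short argument via \Cref{parity}). What fails is your Step~2, ``base change implies distinction'': this is \Cref{rafli2}, which the paper shows is \emph{equivalent} to \Cref{jac2} and hence just as false. You invoke it as though it were known, but it is not --- it is the very content of the conjecture, transported to the Galois side.

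Two smaller points. First, there is one quasi-split unitary group $\rU_n$ with two base change maps (stable and unstable), not two unitary forms $U_n^{\pm}$. Second, the characterization of $\mathcal{B}_n$ for rigid $\pi$ is simply $\pi^\vee\cong\pi^\tau$ with no parity-of-$n$ modification; the $\chi_{-1}$-twist interchanges the stable and unstable images (\Cref{distn}) but does not enter the description of their union.
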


Conjecture \ref{jac2} was resolved by the first author in \cite{G} for ladder representations which is a subclass of rigid representations recently introduced by Lapid and M\'inguez (\cite{LM}). The class of ladder representations contains all Speh
representations. Since Speh representations are the building blocks of the
unitary dual (see \cite{Ta}), any irreducible unitarizable representation of
$\GL_{n}(E)$ is a product of ladder representations.

\subsubsection{} Yet another approach for studying distinguished representation is through Langlands functorial lifts. A paradigm, motivated mostly by the relative trace formula approach for automorphic representations, is that the class of $H$-distinguished representations of $G$ may correspond to the image of a certain functorial lift.

Let $\rU_n$ be the quasi-split unitary group over $n$ variables defined with respect to the quadratic extension $E/F$. The {\it stable} and the {\it unstable} base change maps take irreducible admissible representations of $\rU_{n}$ to those of $\GL_{n}(E)$. Their image, on the
Langlands parameter level, is classified in \cite{GGP} (see also \cite[Lemma~2.2.1]{Mo}). 
We have the following statement (see \cite{F}, \cite{F1}) relating the
image of the base change maps with the class of irreducible $H$-distinguished
representations of $G$. We state below the version in \cite[Conjecture~1.1]{AR}.
\begin{stat}\label{rafli}
  Let $\pi$ be an irreducible representation of $\GL_{n}(E)$. If $n$ is odd
  (resp., even), then $\pi$ is $\GL_{n}(F)$-distinguished if and only if it is a
  stable (resp., unstable) base change from $\rU_{n}$.
\end{stat}

For cuspidal representations of $\GL_{3}(E)$, \Cref{rafli} was proved in \cite{AR}. On the other hand, it is not valid in general as the following example indicates. Let $\pi=L([\nu^{-1}\sigma,\sigma][\sigma,\nu\sigma])$ (see \S \ref{s_irr_gln} for the notation) where $\sigma$ is an arbitrary conjugate self-dual cuspidal representation of $\GL_{k}(E)$ for some $k$. By the results in \cite{Mo} and
\cite{G} (Lemma \ref{mok_res} and Theorem \ref{j_ladder} in this article), it is
easy to see that for an appropriate choice of $\sigma$, the representation $\pi$
lies in the image of the unstable base change map but is not
$H$-distinguished. However, $\chi_{-1}\pi$ is indeed so. 

Inspired by this example and \Cref{jac2}, we propose a weaker version of \Cref{rafli}:
\begin{conjecture}\label{rafli2}
Let $\pi$ be a rigid representation of $\GL_{n}(E)$. Then at least one of $\pi$ and $\chi_{-1}\pi$ is $\GL_{n}(F)$-distinguished if and only if $\pi$ lies in the union of the images of the stable base change map and the unstable base change map.
\end{conjecture}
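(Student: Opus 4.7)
My plan is to reduce \Cref{rafli2} to the combination of \Cref{jac2} with a parameter-level characterization of the union of the two base change images. First I would establish, using the classification from \cite{GGP} (recalled in \cite[Lemma~2.2.1]{Mo}), that the image of the stable base change from $\rU_n$ consists of irreducible $\pi$ of $\GL_n(E)$ whose $L$-parameter is conjugate self-dual of one sign, while the unstable image corresponds to the opposite sign. Since conjugate self-duality of the parameter is equivalent to $\pi^\vee \cong \pi^\tau$, the union of the two images equals $\{\pi : \pi^\vee \cong \pi^\tau\}$ as a set of isomorphism classes.

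Next I would verify that twisting by $\chi_{-1}$ preserves this symmetry condition while generically swapping the two base change images. A short computation using $\chi_{-1}|_{F^\times} = \omega$ shows $\chi_{-1}(N_{E/F}(z)) = 1$ for all $z \in E^\times$, whence $\chi_{-1}^\tau = \chi_{-1}^{-1}$, and then $(\chi_{-1}\pi)^\vee \cong (\chi_{-1}\pi)^\tau$ iff $\pi^\vee \cong \pi^\tau$. The sign of conjugate self-duality of the parameter, however, flips under this twist, so $\pi$ and $\chi_{-1}\pi$ typically land in opposite base change images and, conversely, every conjugate self-dual $\pi$ can be matched with at least one base change image (possibly after replacing $\pi$ by $\chi_{-1}\pi$). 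At this point \Cref{rafli2} reduces to \Cref{jac2}: for a rigid $\pi$, at least one of $\pi$ and $\chi_{-1}\pi$ is $\GL_n(F)$-distinguished iff $\pi^\vee \cong \pi^\tau$. Invoking \Cref{jac2} in the cases where it is already a theorem (the ladder case, \cite{G}) then yields \Cref{rafli2} for ladder representations.

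The main obstacle is the first step. The GGP description comes with signs and auxiliary twists — when $n$ is even, the unstable map already involves a twist by a character whose restriction matches $\omega$, and the two images may overlap non-trivially on representations satisfying $\pi \cong \chi_{-1}\pi$ — so one must carefully translate parameter-level statements into statements about actual irreducible representations supported on a single cuspidal line, ruling out mismatches caused by reducibility phenomena on that line. Beyond the ladder case, a full resolution of \Cref{rafli2} would additionally require \Cref{jac2} for arbitrary rigid $\pi$, which currently remains open.
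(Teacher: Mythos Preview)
The statement you are attempting to prove is a conjecture that the paper ultimately \emph{refutes} for general rigid representations (see the remark following \Cref{conv_her_prop} and \S\ref{sec_imp_ce}). So your closing line that the general case ``currently remains open'' is incorrect: the paper exhibits explicit rigid $\pi$ with $\pi^\vee\cong\pi^\tau$ such that neither $\pi$ nor $\chi_{-1}\pi$ is $\GL_n(F)$-distinguished.

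Your reduction of \Cref{rafli2} to \Cref{jac2} is exactly the paper's strategy (\Cref{equiv}), and your application to ladders via \cite{G} matches \Cref{rf_ladder}. However, there is a genuine gap in your first step. You assert that the union of the two base change images equals $\{\pi:\pi^\vee\cong\pi^\tau\}$ on the grounds that conjugate self-duality of the parameter is equivalent to $\pi^\vee\cong\pi^\tau$. But by \Cref{mok_res} the union of the images consists of those $\pi$ whose parameter is conjugate self-dual \emph{with a parity}; a conjugate self-dual parameter need not admit any parity in general (cf.\ \Cref{rmk:parity_gen}). The missing ingredient is precisely \Cref{parity}: for a \emph{rigid} $\pi$ with $\pi^\vee\cong\pi^\tau$, all conjugate self-dual irreducible constituents of $\rec(\pi)$ are of the form $\rho_0\otimes\Sp(m)$ with the same $\rho_0$ and with all $m$ of the same parity, forcing a common sign. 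This is where rigidity is actually used, and your ``main obstacle'' paragraph does not identify it --- the issues you flag (sign conventions, the twist in the unstable map, overlap of images) are bookkeeping, not the substantive point.
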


\subsection{Main Results}
We will now describe, in short, the main results that we obtain in this article. To ease the burden of notation in the exposition, we do not provide them here in their most general form. 

\subsubsection{}
 We begin by obtaining an explicit combinatorial description of the rigid representations that lie in the union of the images of the stable and the unstable base change maps:
\begin{prop}[\Cref{equiv}]\label{equiv_int}
Let $\pi$ be a rigid representation of $\GL_{n}(E)$. Then the following two statements are equivalent:
\begin{enumerate}
\item $\pi^{\vee}\cong \pi^{\tau}$
\item $\pi$ lies in the union of the images of the stable base change map and
  the unstable base change map
\end{enumerate}
\end{prop}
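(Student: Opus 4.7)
The plan is to translate both conditions into combinatorial conditions on the multisegment parametrizing $\pi$ (via the Langlands classification applied to the single cuspidal line supporting $\pi$), and then verify that these two combinatorial conditions coincide.

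The direction (2) $\Rightarrow$ (1) is essentially built into the base change construction: any irreducible representation in the image of either the stable or unstable base change from $\rU_n$ has a conjugate self-dual Langlands parameter, which on the $\GL_n(E)$-side forces $\pi^\vee \cong \pi^\tau$. This half does not require rigidity and should follow by a direct appeal to Mok's description (Lemma \ref{mok_res}).

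For the substantive direction (1) $\Rightarrow$ (2), let $\pi$ be rigid with cuspidal support contained in a single cuspidal line $\cL = \{\nu^k \sigma\}_{k \in \IZ}$ for some cuspidal $\sigma$ of $\GL_d(E)$, and associate to $\pi$ its multisegment $\mathfrak{m}$ in $\cL$. The hypothesis $\pi^\vee \cong \pi^\tau$ forces the cuspidal support of $\pi$ to be stable under $\rho \mapsto (\rho^\tau)^\vee$. Since $\pi$ is rigid, this support lies on the single line $\cL$, which must therefore be preserved by that involution; this constrains $\sigma$ to satisfy $(\sigma^\tau)^\vee \cong \nu^a \sigma$ for a specific integer $a$ determined by the endpoints of the segments of $\mathfrak{m}$. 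The involution $\rho \mapsto (\rho^\tau)^\vee$ then induces a combinatorial involution $\iota$ on segments supported in $\cL$, namely $[\nu^p \sigma, \nu^q \sigma] \mapsto [\nu^{-q+a}\sigma, \nu^{-p+a}\sigma]$, and the condition $\pi^\vee \cong \pi^\tau$ translates into $\iota(\mathfrak{m}) = \mathfrak{m}$ as multisets of segments.

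It then remains to show that $\iota$-invariance of $\mathfrak{m}$ is equivalent to $\pi$ lying in the union of the two base-change images. For this, invoke Mok's parametrization (Lemma \ref{mok_res}): conjugate self-dual parameters partition into the two types corresponding to stable and unstable base change, and within a single cuspidal line the union of the two classes is characterized precisely by the same combinatorial symmetry $\iota(\mathfrak{m}) = \mathfrak{m}$, together with the fact that any such symmetric $\mathfrak{m}$ can be decomposed into $\iota$-fixed segments and pairs of segments swapped by $\iota$ (the two species of ``building blocks'' corresponding to the two signs of conjugate self-duality). The main obstacle I anticipate is the uniform bookkeeping needed to handle both base-change types simultaneously and to treat possible shifts of $\sigma$ by $\chi_{-1}$ correctly; concretely, one must ensure that no rigid $\pi$ slips through by having an $\iota$-symmetric multisegment built out of segments whose underlying cuspidals individually fail either conjugate self-duality condition, which is precisely where the restriction to a single cuspidal line (rigidity) is used.
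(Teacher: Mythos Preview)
Your overall framework is sound and matches the paper's, but there is a genuine gap in the direction (1) $\Rightarrow$ (2). You correctly reduce to the parameter side via Mok's Lemma~\ref{mok_res}, but that lemma does \emph{not} say that conjugate self-dual parameters partition into the two base-change types. It says the image of stable (resp.\ unstable) base change consists of conjugate self-dual parameters \emph{admitting a parity} $(-1)^{n-1}$ (resp.\ $(-1)^{n}$). A conjugate self-dual parameter can fail to have any parity at all: by Remark~\ref{rmk:parity_gen}, once you decompose $\rho$ into swapped pairs $\rho'_i \oplus (\rho_i'^{\tau})^\vee$ and isolated conjugate self-dual pieces $\rho''_j$, the parameter has a parity if and only if all the $\rho''_j$ share the \emph{same} parity. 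So proving $\iota(\mathfrak m)=\mathfrak m$ is only the beginning; the real content is to rule out the possibility that two $\iota$-fixed segments contribute irreducible components of opposite parity. Your diagnosis of the ``main obstacle'' is therefore inverted: the danger is not from segments whose cuspidals fail conjugate self-duality (those come in pairs and carry both parities harmlessly), but from the $\iota$-fixed segments themselves.

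The missing observation --- exactly the content of Lemma~\ref{parity} in the paper --- is this. After recentring the line so that $\sigma$ itself is conjugate self-dual (your shift by $\nu^{a/2}$), an $\iota$-fixed segment has the form $[\nu^{-b}\sigma,\nu^{b}\sigma]$ and its parameter is $\rec(\sigma)\otimes \Sp(2b+1)$. Since all segments of $\mathfrak m$ lie on the single line $\sigma^{\IZ}$, the numbers $b$ for the fixed segments are either all integers or all half-integers; hence the lengths $2b+1$ all have the same parity mod $2$, and all the $\rho''_j$ have the same sign. That is precisely where rigidity enters, and without it the implication (1) $\Rightarrow$ (2) is false (cf.\ the non-rigid counterexample in the introduction). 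Once you insert this step, your plan is complete and agrees with the paper's proof.
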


Note that the statement of \Cref{equiv_int} is analogous the description of the image of the base change map from irreducible representations of $\GL_{n}(F)$ to those of $\GL_{n}(E)$ as provided by Arthur and Clozel (cf. \cite[Theorem 6.2(a),(b)]{AC}).    

\subsubsection{}
\Cref{equiv_int} has several interesting consequences. An immediate one is the
equivalence of Conjecture \ref{jac2} and Conjecture \ref{rafli2}. Another one is
that the set of rigid representations of $\GL_{n}(E)$ contained in the union of
the two base change maps is invariant under the Zelevinsky involution (see \S \ref{ss_bc_zel}).

Using the equivalence of the two conjectures and the results of \cite{G}, we describe the relation of the base change maps with distinction for ladder representations (see \Cref{def_lad}):
\begin{theorem}[\Cref{rf_ladder}]
  Let $\pi$ be a ladder representation of $\GL_{n}(E)$ for some $n$. Then at
  least one of $\pi$ and $\chi_{-1}\pi$ is $\GL_{n}(F)$-distinguished if and
  only if $\pi$ lies in the union of the images of the two base change maps.
\end{theorem}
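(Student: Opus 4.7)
The plan is to derive the theorem as an immediate two-step corollary, with all the genuine work already packaged in two prior ingredients: \Cref{equiv_int} above, and the resolution of \Cref{jac2} for ladder representations obtained by the first author in \cite{G}. The role of the proof is simply to chain these two biconditionals together.

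First I would observe that every ladder representation $\pi$ of $\GL_n(E)$ is by construction rigid, since its cuspidal support is supported on a single cuspidal line in the sense of \S \ref{def:cusp_supp}. Therefore \Cref{equiv_int} is directly applicable to $\pi$ and yields
\[
\pi^{\vee} \cong \pi^{\tau} \ \Longleftrightarrow \ \pi \text{ lies in the union of the images of the stable and unstable base change maps.}
\]

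Next, I would invoke the main theorem of \cite{G}, which settles \Cref{jac2} for the subclass of ladder representations. This gives the complementary biconditional
\[
\pi^{\vee} \cong \pi^{\tau} \ \Longleftrightarrow \ \text{at least one of } \pi \text{ and } \chi_{-1}\pi \text{ is } \GL_n(F)\text{-distinguished.}
\]
Concatenating the two displayed equivalences produces exactly the statement to be proved.

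Accordingly, the only substantial obstacle is upstream, in the proof of \Cref{equiv_int}: once that combinatorial characterization of the union of base change images by the symmetry condition $\pi^{\vee}\cong \pi^{\tau}$ is in hand, no further analysis of the ladder structure is required. In particular, one sidesteps any direct matching of Langlands parameters on the quasi-split unitary side with the segment data parametrising ladder representations, which would otherwise have to be carried out by unwinding the classification in \cite{Mo} and comparing it by hand to the distinction criterion of \cite{G}.
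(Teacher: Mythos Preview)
Your proposal is correct and matches the paper's approach exactly: the paper itself states that ``the fact that Conjecture \ref{rafli2} holds in the ladder case is an immediate consequence of Proposition \ref{equiv} and Theorem \ref{j_ladder},'' which is precisely the two-step chaining you describe. The more refined parts (1)--(3) of \Cref{rf_ladder} invoke the additional \Cref{t_even}, but the introductory version you were asked to prove needs only the two ingredients you cite.
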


\subsubsection{}
Finally, for the case of mutually unlinked ladder representations (see \Cref{def_mut_unl}), we obtain a converse to the hereditary property of $\GL_{n}(F)$-distinguished representations:
\begin{prop}[\Cref{conv_her_prop}]\label{conv_her_prop_int}
Let $\pi$ be an irreducible representation of $\GL_{n}(E)$ and suppose that there exists mutually unlinked proper ladder representations $\pi_1,\ldots,\pi_k$ such that
\begin{equation*}
\pi= {\rm Ind}^{GL_{n}(E)}_{P}(\pi_1\otimes\cdots\otimes \pi_k)
\end{equation*}
and $P$ is the parabolic corresponding to $\pi_1\otimes\cdots\otimes \pi_k$. Then $\pi$ is $\GL_{n}(F)$-distinguished if and only if there is a permutation $w$ in $k$ variables such that:
\begin{enumerate}
\item $\pi_{w(i)}\cong (\pi_i^\vee)^{\tau}$ for all $1\leq i\leq k$
\item For every $i$ such that $w(i)=i$, $\pi_i$ is $\GL_{n_{i}}(F)$-distinguished (for the appropriate integer $n_{i}$).
\end{enumerate}
\end{prop}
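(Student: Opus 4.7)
The plan is to prove the two directions separately, with the ``if'' direction being a standard hereditary-type argument and the ``only if'' being the main substance.

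\emph{Sufficiency.} Group the indices of $\{1,\ldots,k\}$ into orbits under $w$. For each transposition orbit $\{i,w(i)\}$ the induced representation $\mathrm{Ind}(\pi_i \otimes (\pi_i^\vee)^\tau)$ on the associated $\GL_{2n_i}(E)$-block carries a nonzero $\GL_{2n_i}(F)$-invariant linear form by the standard open-orbit construction (cf.~\cite{M1}, going back to Blanc--Delorme). Each fixed point $i=w(i)$ contributes a $\GL_{n_i}(F)$-invariant form on $\pi_i$ by assumption. Inducing the tensor product of these invariant forms in stages to $\GL_n(E)$, and using that $\pi$ is irreducible as a consequence of mutual unlinkedness, produces a nonzero element of $\Hom_{\GL_n(F)}(\pi,\mathbb{C})$.

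\emph{Necessity.} For the converse I would analyze
\begin{equation*}
  \Hom_{\GL_n(F)}(\pi,\mathbb{C}) \;=\; \Hom_{\GL_n(F)}\bigl(\mathrm{Ind}^{\GL_n(E)}_P(\pi_1 \otimes \cdots \otimes \pi_k),\,\mathbb{C}\bigr)
\end{equation*}
via the Mackey-type filtration on $\pi|_{\GL_n(F)}$ coming from the $\GL_n(F)$-orbit decomposition of $P\backslash \GL_n(E)$. These double cosets are parametrized by partial involutions in $S_k$, i.e.\ perfect matchings of a subset of $\{1,\ldots,k\}$ with the complement as fixed points, together with discrete data recording how the fixed-point blocks embed in $\GL_n(F)$. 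A nonzero $\GL_n(F)$-invariant functional on $\pi$ must be detected on at least one such orbit; by Frobenius reciprocity, the contribution attached to a partial involution $w$ is nonzero precisely when, for every transposition $\{i,w(i)\}$, there is a nontrivial intertwining $\pi_{w(i)} \to (\pi_i^\vee)^\tau$ (forcing $\pi_{w(i)} \cong (\pi_i^\vee)^\tau$ by Schur), and, for every fixed point $w(i)=i$, the representation $\pi_i$ is itself $\GL_{n_i}(F)$-distinguished. Extending the partial involution by the identity on its fixed set yields the required permutation.

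\emph{Main obstacle.} The delicate point will be to promote the orbit filtration into a usable decomposition on the Hom side, i.e.\ to exclude that contributions from non-open orbits conspire to produce an invariant functional while the ``expected'' diagonal contribution vanishes. The crucial input here is mutual unlinkedness: combined with the Lapid--M\'inguez description of Jacquet modules of ladder representations (already used in \Cref{j_ladder}), it rules out any nontrivial isomorphism between Jacquet-module constituents of distinct $\pi_i$'s, and thereby enforces enough rigidity in the combinatorial bookkeeping to pin down exactly the permutation $w$ and the fixed-point distinction conditions asserted in the proposition.
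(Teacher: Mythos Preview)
Your sufficiency argument is fine and matches the paper's (which cites \cite{F1} and \cite{BD}).

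For necessity, the overall strategy via the geometric lemma is the right one and is what the paper uses, but your description of the orbit structure contains a genuine gap. The double cosets $P\backslash \GL_n(E)/\GL_n(F)$ are \emph{not} parametrized by partial involutions in $\mathcal S_k$. They are indexed by involutions $w_1$ in the set $W[M]$ of minimal-length representatives in $W_M\backslash W/W_M$, where $W$ is the full Weyl group of $\GL_n$ (see \S\ref{gl_mu}). The stabilizer attached to such a $w_1$ involves the Levi $M(w_1)=M\cap w_1 M w_1^{-1}$, which in general is strictly smaller than $M$. Consequently the contribution of that orbit is not a condition on the $\pi_i$ themselves, but on the irreducible constituents $\sigma_{(i,j)}$ of the Jacquet module $r_{M(w_1),M}(\pi_1\otimes\cdots\otimes\pi_k)$: one needs $\sigma_{\epsilon_{w_1}(i,j)}\cong(\sigma_{(i,j)}^\vee)^\tau$ for all $(i,j)$, and $H$-distinction of $\sigma_{(i,j)}$ at fixed points of the induced involution $\epsilon_{w_1}$. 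Your statement that ``the contribution attached to a partial involution $w$ is nonzero precisely when \ldots\ $\pi_{w(i)}\cong(\pi_i^\vee)^\tau$'' describes only the orbits with $M(w_1)=M$.

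The bulk of the paper's argument (the two Claims in \S\ref{ss_conv_her_prop1}--\S\ref{ss_conv_her_prop2}) is precisely the elimination of the other orbits: one must show that for every index $i$ with $\pi_i$ conjugate self-dual, in particular for those isomorphic to the chosen fixed point $\pi_{i_0}$, the contributing $w_1$ satisfies $s_i=1$, i.e.\ the Jacquet functor does not break $\pi_i$ into smaller pieces. This is where mutual unlinkedness and the Kret--Lapid description \cite{KL} of Jacquet modules of ladders enter, via the auxiliary \Cref{auxlem}; the argument is a case-by-case segment analysis and does not reduce to a single ``no nontrivial isomorphism between Jacquet constituents'' observation. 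Only once $s_i=1$ is established on the relevant index set does $\epsilon_{w_1}$ descend to an honest involution there, and then a parity count ($|I|$ odd) produces a fixed point at which $\pi_{i_0}$ is $H$-distinguished. Your ``Main obstacle'' paragraph gestures in the right direction but substantially underestimates what is required.
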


Note that by \cite[Theorem 9.7]{Z} the generic irreducible representations of $\GL_{n}(E)$ satisfy the hypotheses of \Cref{conv_her_prop_int}. Thus this result generalizes \cite[Theorem~5.2]{M}.

Another interesting consequence of \Cref{conv_her_prop_int} is that it allows us
to come up with many examples of irreducible representations which satisfy the
symmetry condition in \Cref{jac2}, but neither they nor their twists by
the character $\chi_{-1}$ is $\GL_{n}(F)$-distinguished. Since \Cref{jac2} and
\Cref{rafli2} are equivalent, it demonstrates the failure of the two conjectures
in the class of representations irreducibly induced from ladders. We finish
the paper by giving yet another counterexample, consisting of an \emph{imprimitive} representation of $\GL_{n}(E)$ (see \S\ref{sec_imp_ce}).

\subsection{Structure of the Paper}
Let us now delineate the contents of this paper. After briefly setting up some
general notation in \S \ref{s_gen_not}, we move on \S \ref{s_irr_gln} where we
recall the definition of segments and other preliminaries concerning the
classification of irreducible representations of $\GL_{n}$ over a
non-archimedean local field a la Zelevinsky. After this, in \S \ref{s_pri_gal}
we recall the basic definitions and some preliminary notions of $L$-groups and
$L$-parameters, and the two base change lifts that we need for this article. In \S
\ref{s_equiv} we obtain a characterization of the union of the image of the two
base change maps for rigid representations and derive some consequences of
it. The results for the ladder representations, in particular the validity of
Conjecture \ref{rafli2} for this class, are contained in \S
\ref{s_dist_bc_ladder}. In \S \ref{s_ind_ladder}, we provide a converse
to the hereditary property of $H$-distinguished representations for the case of
mutually unlinked ladders. Finally, in \S \ref{sec_imp_ce} we provide an example of an imprimitive representation of $\GL_{n}(E)$ which satisfies the
symmetry condition in \Cref{jac2}, but neither it nor its twist by the character $\chi_{-1}$ is $\GL_{n}(F)$-distinguished.

\subsection{Acknowledgments}
The authors would like to thank Wee Teck Gan, Erez Lapid, Omer Offen, Dipendra Prasad, Eitan Sayag, and Jiu Kang Yu for several helpful conversations. The third author would like to thank Steven Spallone for answering his questions on parity of self-dual representations and sharing his preprint on the subject matter with him. The second and the third author would like to thank the Hausdorff Institute for Mathematics (Bonn) for its warm hospitality where this project was initiated. Part of the work was done during the third author's visit to CUHK (Hong Kong). It is a pleasure for him to thank Jiu Kang Yu for inviting him for the same and the institute for providing an excellent work environment.  

\section{General notation}\label{s_gen_not}
We set up some primary notation in this section. More specific notation is defined in the sections when it first occur.
\subsubsection{}
We will closely follow here the notation of \cite{Mo} when applicable. Let $E/F$
be a fixed quadratic extension of non-archimedean local fields of characteristic
$0$. We will often use bold font (for example $\mathbf{G}, \mathbf{H}$) to
denote an algebraic group defined over $F$ and usual font (for example $G, H$) to denote the
topological group of its $F$-points. Let $\Alg(G)$ denote the category of
complex valued, smooth, representations of $G$ of finite length and $\Irr(G)$
the class of irreducible representations in $\Alg(G)$.

Let $\pi^{\vee}$ be the contragredient of a representation $\pi \in
\Alg(G)$.

\subsubsection{}We will refer to distinction of representations in the following sense:
\begin{defn}
Let $G'$ be a closed subgroup of $G$ and let $\chi$ be a character of $G'$. We say that $(\pi,V_{\pi})\in \Alg (G)$ is $(G',\chi)$-distinguished if there exists a non-zero $G'$-invariant linear functional on $V_{\pi}$ which transforms by the character $\chi$, i.e.
$$\Hom_{G'}(V_{\pi}|_{G'},\chi)\neq 0.$$
\end{defn}
If $\chi$ is the trivial character of $G'$, we simply say that $\pi$ is $G'$-distinguished.

\subsubsection{} Henceforth, $\mathbf{G_{n}}$ and $\mathbf{H_{n}}$ will always
denote the reductive groups $\mathbf{\Res_{E/F}\GL_{n}}$ and $\mathbf{\GL_{n}}$
respectively. Thus $G_{n}\cong \GL_{n}(E)$. 

Recall that, in \S\ref{sec:intro.conj}, we set $\omega=\omega_{E/F}$
to be the quadratic character of $F^{\times}$ whose kernel is equal to the image of
the norm map from $E^{\times}$ to $F^{\times}$. We fixed an extension
  $\chi_{-1}$ of $\omega$ to $E^\times$. Furthermore, $\omega$ and $\chi_{-1}$ are also
  viewed as characters of $W_F$ and $W_E$ respectively via the reciprocity map of local class field
  theory where $W_{F}$ and $W_{E}$ are the Weil groups of the respective fields.  

Denote by $\tau$ the non-trivial element of ${\rm Gal}(E/F)$. For
$\pi\in \Alg(G_{n})$, denote by $\pi^{\tau}$ the representation of $G_{n}$ on
the space of $\pi$ given by $\pi^{\tau}(g)=\pi(\tau(g))$.

The norm character of $G_{n}$ (resp. $H_{n}$) given by $|{\rm det}(\cdot)|_{E}$
(resp.  $|{\rm det}(\cdot)|_{F}$) is denoted by $\nu_{E}$ (resp. $\nu_{F}$). We
will sometimes suppress the field in the subscript of the character if it is
clear from the context.

\subsubsection{} To shorten our notation, for $\pi\in \Irr(G_{n})$, we will often omit the subscript and use the phrase $\pi$ is $H$-distinguished (resp. $(H,\omega)$-distinguished) to say that $\pi$ is $H_{n}$-distinguished (resp. $(H_{n},\omega)$-distinguished), for the corresponding $n$.

\subsubsection{}
Set $G=G_{n}$ and let $P=M\ltimes U$ be a standard parabolic subgroup of $G$
with its standard  Levi decomposition. We will denote by $\ip_{G,M}$ the
normalized parabolic induction functor from $\Alg(M)$ to $\Alg(G)$. Let $(n_{1},\dots,n_{k})$ be a decomposition of $n$ and let $\sigma_i\in\Alg(G_{n_i})$, $i=1,\dots,k$.  Assume that $M\cong \Pi_{i=1}^{k}G_{n_i}$. Then $\sigma=\sigma_1\otimes \cdots \otimes \sigma_k\in \Alg(M)$. Set
\begin{equation*}
\sigma_1\times\cdots \times \sigma_k :=\ip_{G,M}(\sigma).
\end{equation*}

The normalized Jacquet functor $\Alg(G)$ to $\Alg(M)$ will be denoted by $\jm_{M,G}$. It is left adjoint to the normalized parabolic induction functor.

\subsubsection{The quasi-split unitary group}
Denote by $\bfU_{n}=\bfU_{n,E/F}$ the quasi-split unitary group in $n$ variables with respect to the extension $E/F$. The group of $F$-points of $\bfU_{n}$ as matrices is given by
\[
\rU_{n}= \Set{g\in \GL_{n}(E)| {}^{t}\tau(g) \,J_{n}\, g=J_{n}},
\]
where
\begin{equation}\label{def_jn}
J_{n}=\begin{pmatrix}
            & & & 1 \\
            & & -1 &  \\
            & \iddots & & \\
     (-1)^{n-1} & & &
   \end{pmatrix}.
 \end{equation}

 \subsubsection{} We will use the term \emph{multi-set} to mean set with multiplicities. More formally, a multi-set on a (possibly infinite) set $\mathcal D$ means a function from the set $\cD$ to the set of non-negative integers. In particular, for two  multi-sets $\fmm_{1}$ and $\fmm_{2}$, $\fmm_1+\fmm_2$ and $\fmm_1\geq\fmm_2$ make sense. All the multi-sets that we will encounter in this note will be finitely supported. We will denote by $\abs{\fmm}$ the non-negative integer $\Sigma_{x\in \cD}\fmm(x)$.

\subsubsection{} The permutation group on $t$ variables will be denoted by $\mathcal S_{t}$.

\section{Preliminaries on irreducible representations of $\GL_n$}\label{s_irr_gln} 
In this section, let $F'$ denote any non-archimedean local field. Let $\Irr_{F'}$ be the set $\sqcup_{n\geq 0}\Irr(\GL_{n}(F'))$. Denote the set of cuspidal representations in $\Irr_{F'}$ by $\Cusp_{F'}$. For a $\sigma\in \Cusp_{F'}$ define its \emph{cuspidal line}
\begin{equation*}
\sigma^\Z=\set{\nu_{F'}^{m}\sigma| m\in \mathbb Z}.
\end{equation*}

\subsection{Classifications of irreducible representations of $\GL_n(F')$} We now recall the combinatorial notion of {\it segments} introduced by Zelevinsky (in \cite{Z}), and briefly review the classification of irreducible representations of $\GL_{n}(F')$.
\subsubsection{}
\begin{defn}
  Given a representation $\sigma\in \Cusp_{F'}$ and $a,b \in \Z$ such that
  $a\leq b+1$, define the segment $[a,b]_{(\sigma)}$ (also denoted by
    $[\nu^a\sigma,\nu^b\sigma]$) to be the set $\{ \nu^{a}\sigma, \nu^{a+1}\sigma,\dots,\nu^{b}\sigma \}$ if $a\leq b$ and
  the empty set if $a=b+1$. We say that the segment $[a,b]_{(\sigma)}$ is
  supported on $\sigma^\Z$.
\end{defn}

For a segment $\Delta=[a,b]_{(\sigma)}$, we denote by $b(\Delta)=\nu^{a}\sigma$ its beginning, by $e(\Delta)=\nu^{b}\sigma$ its end, and by $\ell(\Delta)=b-a+1$ its length respectively. For a segment $[a,b]_{(\sigma)}$, $[a,b]_{(\sigma)}^{\vee}$ will mean the segment $[-b,-a]_{(\sigma^{\vee})}$ and $[a,b]_{(\sigma)}^{\tau}$ will denote the segment $[a,b]_{(\sigma^{\tau})}$.

The representation $\nu^{a}\sigma \times \cdots\times \nu^{b}\sigma$ has a
unique irreducible subrepresentation and a unique irreducible quotient which we
write as $Z(\Delta)$ and $L(\Delta)$ respectively. By convention, if the set
$\Delta$ is empty, then both $Z(\Delta)$ and $L(\Delta)$ are defined to be the
trivial representation of the trivial group.
\subsubsection{}
\begin{defn}
Two segments $\Delta_{1}$ and $\Delta_{2}$ are said to be linked if $\Delta_{1}\nsubseteq \Delta_{2}$, $\Delta_{2}\nsubseteq \Delta_{1}$ and $\Delta_{1}\cup \Delta_{2}$ is also a segment. If $\Delta_{1}$ and $\Delta_{2}$ are linked and $b(\Delta_{1}\cup\Delta_{2})=b(\Delta_{1})$, then we say that $\Delta_{1}$ precedes $\Delta_{2}$ and write $\Delta_{1}\prec \Delta_{2}$.
\end{defn}
Let $\cO$ be the set of multi-sets of segments. An ordering $\Delta_1, \Delta_2, \cdots, \Delta_t$ on a multi-set
  $\fmm=\set{\Delta_1,\dots,\Delta_t}\in\cO$  is of \emph{standard form} if
$\Delta_i\not \prec\Delta_j$ for all $i<j$. Clearly every $\fmm\in \cO$ admits a
standard order.

\subsubsection{The Zelevinsky classification} Let
$\fmm=\{\Delta_1,\dots,\Delta_t\}\in\cO$ be ordered in standard form. The
representation  
\begin{equation*}
Z(\Delta_1) \times\cdots \times Z(\Delta_t)
\end{equation*}
is independent of the choice of order of standard form. It has a unique irreducible submodule that we denote by $Z(\fmm)$.

The Zelevinsky classification says that the map $(\fmm\mapsto Z(\fmm)):\cO\rightarrow \Irr_{F'}$ is a bijection.

\subsubsection{The Langlands classification} Let
$\fmm=\{\Delta_1,\dots,\Delta_t\}\in\cO$ be ordered in standard form. The
representation
\begin{equation*}
L(\Delta_1)\times\cdots\times L(\Delta_t)
\end{equation*}
is independent of the choice of order of standard form. It has a unique irreducible quotient that we denote by $L(\fmm)$.

The Langlands classification says that the map $( \fmm\mapsto L(\fmm)) :\cO\rightarrow \Irr_{F'}$ is a bijection.

\subsubsection{} For a multi-set $\fmm=\{\Delta_{1},\dots,\Delta_{t}\}$,
$\fmm^{\vee}$ will denote the multi-set
$\{\Delta_{1}^{\vee},\dots,\Delta_{t}^{\vee}\}$. It is known that
$Z(\fmm)^{\vee}\cong Z(\fmm^{\vee})$ and $L(\fmm)^{\vee}\cong L(\fmm^{\vee})$
(see \cite[Theorem 7.10]{Z}).

The multi-set $\fmm^{\tau}$ will denote the multi-set $\{\Delta_{1}^{\tau},\dots,\Delta_{t}^{\tau}\}$. As above we have $Z(\fmm)^{\tau}\cong Z(\fmm^{\tau})$ and $L(\fmm)^{\tau}\cong L(\fmm^{\tau})$.

\subsubsection{The Zelevinsky involution}\label{mwalgo}
It follows from the two classifications above that for any $\fmm\in \cO$ there
exists a unique $\fmm^t\in \cO$ such that $Z(\fmm)=L(\fmm^t)$. The function
$\fmm\mapsto \fmm^t$ is an involution on $\cO$ known as the Zelevinsky
involution. For $\pi=Z(\fmm)\in \Irr_{F'}$, let $\pi^t=L(\fmm)$. Then
$\pi\mapsto \pi^t$ is the corresponding involution on $\Irr_{F'}$.

Given a multi-set $\fmm$, an algorithm to compute $\fmm^{t}$ is provided in \cite{MW}.

\subsection{The cuspidal support}\label{def:cusp_supp}
For every $\pi\in \Irr_{F'}$ there exist
$\sigma_1,\dots,\sigma_k\in \Cusp_{F'}$, unique up to rearrangement, so that
$\pi$ is isomorphic to a subrepresentation of
$\sigma_1\times \cdots \times \sigma_k$ (see \cite[Proposition 1.10]{Z}). Let
\[
\supp(\pi)=\set{\sigma_i|i=1,\dots,k}
\]
be the support of $\pi$. For $\fmm\in\cO$, let
\[
\supp(\fmm)=\{\sigma\in \Cusp_{F'}: \sigma\in\Delta\text{ for some }\Delta\in
\fmm\}
\]
be the support of $\fmm$ \footnote{The support is often considered as a
  multi-set. In this article though only the underlying set plays a role and
  hence we will treat the support, of both a representation and a multi-set of
  segments, as a set.}.

\subsubsection{} A representation $\pi\in \Irr_{F'}$ is said to be \emph{rigid} if
$\supp(\pi)\subseteq \sigma^\Z$ for some $\sigma\in
\Cusp_{F'}$. Similarly, a multi-set
$\fmm\in \cO$ is called rigid if $\supp(\fmm)\subseteq\sigma^\Z$ for some
$\sigma\in \Cusp_{F'}$.  Let
\begin{equation*}
\cO_{\sigma}=\{\fmm\in\cO:\supp(\fmm)\subseteq \sigma^\Z\}
\end{equation*}
be the set of rigid multi-sets supported on $\sigma^\Z$.

\subsubsection{} When dealing with elements of $\cO_{\sigma}$ for a fixed cuspidal representation $\sigma$, if there is no scope for confusion, we
will often omit $\sigma$ from our notation and consider segments as sets of
integers. Let $\Delta=[\nu^{a}\sigma,\nu^{b}\sigma]$. Then $b(\Delta)$ and
$e(\Delta)$ will denote the integers $a$ and $b$. In particular, for $\Delta$
and $\Delta'=[\nu^{a'}\sigma,\nu^{b'}\sigma]$, we will write
$b(\Delta)\leq b(\Delta')$ or $e(\Delta)\leq e(\Delta')$ to denote $a\leq a'$ or
$b\leq b'$ respectively.

\section{Preliminaries on the Galois side and the base change maps}\label{s_pri_gal}
\subsection{The $L$-groups and the local Langlands parameters}
Let $W_E$ denote the Weil group of $E$ and $W'_E:= W_E\times \SL_{2}(\bC)$ denote the Weil-Deligne group.  Let
$\Phi(\GL_n(E))$ be the set of Langlands parameters of $\GL_n(E)$ so that each element $\rho\in \Phi(\GL_n(E))$
is an (equivalence class of) $n$-dimensional representation of $W'_E$ and 
\[
\Phi_E=
\bigsqcup_n \Phi(\GL_n(E)). 
\]
Let 
\[
\xymatrix{
\rec \colon \Irr_E\ar[r] & \Phi_E
}
\]
be the local Langlands reciprocity map established in \cite{HT} (later also in \cite{H} and \cite{Sc}).

By \cite{Z}, the Langlands reciprocity map is reduced to the cuspidal
case. More precisely, if $\pi = L(\fmm)$ where $\fmm =
\set{[\nu^{a_{1}}\sigma_1,\nu^{b_{1}}\sigma_1], \dots,
  [\nu^{a_{t}}\sigma_t,\nu^{b_{t}}\sigma_t]}$, then 
\[
\rec(\pi)= \bigoplus_i \rec(\nu^{\frac{a_{i}+b_{i}}{2}}\sigma_i)\otimes \Sp(b_{i}-a_{i}+1).
\]
Here $\rec(\nu^{\frac{a_{i}+b_{i}}{2}}\sigma_i)$ is an irreducible $W_E$-representation 
and 
$\Sp(m)$ denotes the unique irreducible $m$-dimensional representation of $\SL_2(\bC)$.


The group $W_{E}$ can be naturally identified with a subgroup of $W_{F}$ and the quotient $W_F/W_E \cong \Gal(E/F)$. For the non-trivial element $\tau\in\Gal(E/F)$, let $w_\tau$ be a fixed lift of $\tau$ in $W_F$.  For any $W'_E$-module $\rho$, let $\rho^\tau(w) := \rho(w_\tau w w_\tau^{-1})$ for each $w\in W'_E$\footnote{Note that the isomorphism class of $\rho^\tau$ is independent of the choice of $w_\tau$.}.We will use several times in this article the following fact (\cite[Lemma~VII.1.6]{HT}):
\begin{theorem}\label{thm:rec_gal}
 Let $\pi^\tau$ be as defined in \eqref{eq:pi_tau}. Then
  \[
    \rec(\pi^\tau) = \rec(\pi)^\tau.
  \]
\end{theorem}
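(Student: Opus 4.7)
\medskip

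The plan is to prove the statement in two stages: first reduce to the case where $\pi$ is cuspidal, then handle the cuspidal case by invoking the uniqueness characterization of the local Langlands correspondence.

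For the reduction, suppose $\pi = L(\fmm)$ with $\fmm = \{[\nu^{a_i}\sigma_i,\nu^{b_i}\sigma_i]\}_{i=1}^t$. Since $\tau$ is an abstract field automorphism of $E$, it acts as a topological group automorphism of $\GL_{n}(E)$ preserving all standard parabolics, Jacquet functors, and the modulus character (using that $\nu_E$ is $\tau$-invariant). Hence the Langlands classification is functorial under $\tau$, giving $\pi^\tau = L(\fmm^\tau)$ with $\fmm^\tau = \{[\nu^{a_i}\sigma_i^\tau,\nu^{b_i}\sigma_i^\tau]\}$. Applying the formula for $\rec$ recalled just before the statement, and observing that $\Sp(m)$ is an $\SL_2(\bC)$-representation, hence unchanged when the $W_E$-factor is conjugated by $w_\tau$, the equality $\rec(\pi^\tau) = \rec(\pi)^\tau$ follows summand by summand provided $\rec(\sigma^\tau) = \rec(\sigma)^\tau$ holds for every $\sigma \in \Cusp_E$.

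For the cuspidal case, one considers the modified map $\rec'(\sigma) := \rec(\sigma^\tau)^\tau$ from $\Cusp_E$ to the set of irreducible Weil-Deligne representations, and appeals to the fact that $\rec$ is uniquely characterized by compatibility with twisting, with central characters (via class field theory), and with Rankin-Selberg $L$- and $\epsilon$-factors of pairs. The crucial trick is to choose a $\tau$-invariant additive character of $E$ by setting $\psi_E := \psi_F \circ \mathrm{Tr}_{E/F}$ for some nontrivial $\psi_F$ of $F$. With this choice, one checks on the automorphic side that
\[
L(s,\sigma_1^\tau\times\sigma_2^\tau)=L(s,\sigma_1\times\sigma_2),\qquad \epsilon(s,\sigma_1^\tau\times\sigma_2^\tau,\psi_E)=\epsilon(s,\sigma_1\times\sigma_2,\psi_E),
\]
which follows by change of variable $g\mapsto\tau(g)$ in the Rankin-Selberg or Godement-Jacquet integrals, since $\tau$ is a measure-preserving topological automorphism that fixes $\psi_E$. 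The analogous identities on the Galois side for Weil-Deligne representations pre-composed with $\mathrm{Ad}(w_\tau)$ are immediate from the definitions. Thus $\rec'$ satisfies the same characterizing list as $\rec$, and uniqueness forces $\rec' = \rec$.

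The main potential obstacle is the careful bookkeeping for the $\epsilon$-factor equivariance on both sides --- this is essentially the content of \cite[Lemma~VII.1.6]{HT}. Choosing the additive character $\tau$-invariant removes the danger of a stray constant arising from $\psi\mapsto\psi\circ\tau$ and renders the verification mechanical; without this choice one would have to track how $\epsilon(s,\pi,\psi\circ\tau)$ differs from $\epsilon(s,\pi,\psi)$ and show the discrepancy matches on the Galois side, which is doable but opaque.
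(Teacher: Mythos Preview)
The paper does not give its own proof of this theorem: it simply records the statement and cites \cite[Lemma~VII.1.6]{HT} as the source. So there is nothing to compare your argument against in the paper itself.

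Your outline is a correct and standard route to the result, and you have in fact identified the right reference yourself. The reduction to the cuspidal case via the explicit description of $\rec$ on Langlands data is straightforward, and your invocation of the uniqueness characterization of $\rec$ (via $L$- and $\epsilon$-factors of pairs, with a $\tau$-invariant additive character) is exactly the mechanism behind the Harris--Taylor lemma. One small point worth making precise in a full write-up: the change-of-variable argument for $\epsilon$-factors on the automorphic side should be pinned to a specific definition (e.g.\ the Jacquet--Piatetski-Shapiro--Shalika local functional equation) rather than left at the level of ``Rankin--Selberg integrals,'' since the local $\epsilon$-factor is defined via the functional equation of the local zeta integrals, not directly as an integral itself. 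But this is a matter of exposition, not a gap.
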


\subsection{The two base change maps}
The stable and the unstable base change maps take an irreducible representation of $U_{n}$ to an irreducible representation of $\GL_{n}(E)$ corresponding to two homomorphisms between the respective $L$-groups. We refer the reader to \cite[\S 2.1 and \S 2.2]{Mo} for details. We will  recall here the results that we require for our purposes. 

As earlier, fix a choice of the lift $w_\tau\in W_F$ of the non trivial element $\tau\in \Gal(E/F)$. 
\begin{defn}\label{def_parity}
The parameter $\rho\in \Phi(\GL_n(E))$ is called conjugate self-dual if
\begin{equation*}
\rho^{\vee}\cong \rho^{\tau}.
\end{equation*}
Suppose $\rho$ is conjugate self-dual and realized on a vector space $V$. Say that
  $\rho$ is of \emph{parity $\eta$ ($\eta=\pm 1$)} if there exists a non-degenerate bilinear form $B(\cdot,\cdot)$ on $V$ satisfying the following for all $x,y\in V$:
\begin{equation*}
B(\rho^{\tau}(w)x,\rho(w)y)=B(x,y),\ B(x,y)=\eta B(y,\rho(w_{\tau}^{2})x).
\end{equation*}
\end{defn}

\begin{remark}
The notion of parity is independent of the choice of $w_{\tau}$.
\end{remark}

\begin{remark}\label{rmk:SD}
By \Cref{thm:rec_gal}, for any $\pi \in \Irr_E$, $\rec(\pi)$ is conjugate self-dual if and only if $\pi^\vee \cong \pi^\tau$. 
\end{remark}

\begin{remark}
For a conjugate self-dual $\rho$ as above, parity neither has to exist nor be unique if it exists. Nevertheless when $\rho$ is irreducible, there exists a unique parity due to Schur's lemma.  
\end{remark}

\begin{remark}\label{rmk:parity_gen}
More generally, suppose that $\rho=\rho_{1}\oplus\cdots\oplus\rho_{t}$ is conjugate self-dual where each $\rho_i$ is an irreducible $W'_E$-representation. Then, for each $i$, $(\rho_i^\tau)^\vee $ is isomorphic to $ \rho_j$ for some $j$. Therefore, $\rho$ can be written as 
\begin{equation}\label{eq:rho_factor}
 \rho = \bigoplus_{1\leq i \leq r}\left(\rho'_{i}\oplus ({\rho'_i}^{\tau})^{\vee}\right)\oplus \bigoplus_{1\leq j \leq s }\rho''_{j}
\end{equation} 
for some non-negative integers $r,s$ and irreducible $W'_E$-representations $\rho'_{i}$, $\rho''_{j}$ such that $\set{\rho''_{j}:1\leq j \leq s}$ is a set of pairwise non-isomorphic conjugate self-dual irreducible representations of $W'_E$. (Note that the integers $r$ and $s$ are determined by $\rho$.) The conjugate self-dual representation $\rho'_{i}\oplus({\rho'_{i}}^{\tau})^{\vee}$ admits bilinear forms with both parities.  Moreover if $s\geq 1$, then each $\rho_{j}''$ appears with odd multiplicity. Hence $\rho$ has a parity $\eta$ if and only if $\rho''_j$ have parity $\eta$ for all $1\leq j\leq s$, and $\rho$ has both parities if and only if $s=0$ (see \cite[\S 4]{GGP}). 
\end{remark}

\subsubsection{Image of the base change maps at the parameter level}
Thanks to \cite[Theorem 8.1]{GGP} (see also \cite[Lemma 2.1]{Mo}), the image of the stable and unstable base change maps for the Langlands parameters can be characterized as follows:

\begin{lemma}\label{mok_res}
The image of the stable (resp., unstable) base change map in $\Phi(\GL_n(E))$ is the set of conjugate self-dual parameters with parity $\eta=(-1)^{n-1}$ (resp., $\eta=(-1)^{n}$). 
\end{lemma}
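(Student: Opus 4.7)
The plan is to reduce everything to an explicit analysis at the level of $L$-groups, following the framework of \cite[\S 2.1--2.2]{Mo}. Recall that ${}^L \rU_n \cong \GL_n(\bC) \rtimes W_F$, where $W_E$ acts trivially on $\GL_n(\bC)$ and a chosen lift $w_\tau$ of the nontrivial element of $\Gal(E/F)$ acts by $g\mapsto J_n\, {}^tg^{-1} J_n^{-1}$, with $J_n$ given by \eqref{def_jn}. Meanwhile ${}^L G_n \cong (\GL_n(\bC)\times \GL_n(\bC))\rtimes W_F$. The stable base change corresponds to the standard embedding ${}^L \rU_n \hookrightarrow {}^L G_n$, sending $g\rtimes w_\tau$ to $(g, J_n\,{}^tg^{-1}J_n^{-1})\rtimes w_\tau$; the unstable base change is obtained by twisting this embedding by an order-$2$ character of ${}^L \rU_n$.

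First I would start with a parameter $\phi\colon W'_F \to {}^L \rU_n$ and set $\rho := \mathrm{pr}_1\circ \phi|_{W'_E}$. The homomorphism relation $\phi(w_\tau w w_\tau^{-1}) = \phi(w_\tau)\phi(w)\phi(w_\tau)^{-1}$ for $w\in W'_E$ translates, after composing with $\mathrm{pr}_1$, into $\rho^\tau(w) = J_n\, \rho(w)^{-t} J_n^{-1}$; hence $\rho^\vee\cong \rho^\tau$ with explicit intertwiner $A := \phi(w_\tau)J_n$. Unpacking the constraint $\phi(w_\tau^2)= \phi(w_\tau)^2\in\GL_n(\bC)$ produces the nondegenerate bilinear form $B(x,y):=\langle x, Ay\rangle$, and the two relations of Definition~\ref{def_parity} follow from a direct computation.

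Next I would pin down the parity. The formula \eqref{def_jn} gives ${}^tJ_n = (-1)^{n-1}J_n$, and this sign propagates through the computation above to yield parity $\eta=(-1)^{n-1}$ for any parameter in the image of the stable base change. The unstable base change differs by a twist by an order-$2$ character whose effect on $\phi(w_\tau)$ is to multiply it by a scalar whose square is $-1$; this flips the sign of the second relation in Definition~\ref{def_parity}, giving parity $\eta=(-1)^n$.

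For the converse, given a conjugate self-dual $\rho\in\Phi(\GL_n(E))$ carrying a bilinear form $B$ of the specified parity, I would set $A$ to be the matrix of $B$ and define $\phi(w_\tau):= AJ_n^{-1}$; extending this to $W'_F$ produces an admissible parameter into ${}^L \rU_n$ whose (stable or unstable) base change recovers $\rho$. The existence of such a $B$ is guaranteed by Remark~\ref{rmk:parity_gen} under the parity hypothesis. The main obstacle is the careful bookkeeping of signs in the central calculation: one must juggle the transposition symmetry of $J_n$, the scalar by which $w_\tau^2\in W_E$ acts, and the $\pm 1$ introduced by the twist defining the unstable base change, and verify that together they yield exactly the signs $(-1)^{n-1}$ and $(-1)^n$ claimed. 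Since this identification at the level of $L$-parameters is precisely the content of \cite[Theorem~8.1]{GGP} (cf.\ also \cite[Lemma~2.1]{Mo}), in practice I would prove the lemma by direct appeal to these references after the above brief outline.
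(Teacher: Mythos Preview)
Your proposal is correct and aligns with the paper's treatment: the paper does not give a self-contained proof of this lemma at all, but simply states it as a consequence of \cite[Theorem~8.1]{GGP} and \cite[Lemma~2.1]{Mo}, which is exactly where your outline lands. Your sketch of the $L$-group computation is a faithful expansion of what is in those references, so the approaches coincide.
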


\begin{corollary}\label{distn}
Let $\pi\in {\rm Irr}_E$. If $\pi$ is in the image of the stable base change (resp., unstable base change) map from $U_{n}$ if and only if $\chi_{-1}\pi$ is in the image of the unstable base change (resp., stable base change) map.
\end{corollary}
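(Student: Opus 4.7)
The plan is to translate the statement to the Galois side via Lemma~\ref{mok_res}. Since local Langlands is compatible with character twists we have $\rec(\chi_{-1}\pi)=\chi_{-1}\otimes\rec(\pi)$, so it suffices to show: if $\rho\in\Phi(\GL_n(E))$ is conjugate self-dual with parity $\eta$, then $\chi_{-1}\otimes\rho$ is conjugate self-dual with parity $-\eta$. Once this is established, the lemma swaps stable and unstable base change images, which is exactly the claim.

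First I would verify that twisting preserves conjugate self-duality. Since $\chi_{-1}|_{F^\times}=\omega$, on $E^\times$ we have $\chi_{-1}(x)\chi_{-1}(\tau(x))=\chi_{-1}(N_{E/F}(x))=\omega(N_{E/F}(x))=1$; hence $\chi_{-1}^\tau=\chi_{-1}^{-1}$ as characters of $W_E$, and therefore $(\chi_{-1}\otimes\rho)^\tau=\chi_{-1}^{-1}\otimes\rho^\vee=(\chi_{-1}\otimes\rho)^\vee$.

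Next, I would reuse the bilinear form $B$ attached to $\rho$ of parity $\eta$ (as in Definition~\ref{def_parity}) as a candidate form for $\chi_{-1}\otimes\rho$. The invariance condition
carries over immediately from $\chi_{-1}^\tau\chi_{-1}=1$. For the symmetry condition, plugging $(\chi_{-1}\otimes \rho)(w_\tau^2)=\chi_{-1}(w_\tau^2)\rho(w_\tau^2)$ into $B(x,y)=\eta B(y,\rho(w_\tau^2)x)$ gives
\[
B(x,y)=\chi_{-1}(w_\tau^2)^{-1}\eta\, B\bigl(y,(\chi_{-1}\otimes\rho)(w_\tau^2)x\bigr),
\]
so the new parity is $\eta'=\chi_{-1}(w_\tau^2)^{-1}\eta$. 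Thus everything reduces to showing that $\chi_{-1}(w_\tau^2)=-1$.

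This last identity is the main, though elementary, point, and I expect the only real subtlety to lie in carefully matching the conventions of local class field theory. Using the section $\{1,w_\tau\}$ of $W_E\backslash W_F$, a direct coset computation shows that the transfer (Verlagerung) $V\colon W_F^{ab}\to W_E^{ab}$ sends $w_\tau$ to $w_\tau^2$. Under local class field theory $V$ corresponds to the inclusion $F^\times\hookrightarrow E^\times$, so $\chi_{-1}\circ V=\chi_{-1}|_{F^\times}=\omega$. Therefore
\[
\chi_{-1}(w_\tau^2)=\chi_{-1}(V(w_\tau))=\omega(w_\tau),
\]
and $\omega(w_\tau)=-1$ since the character of $W_F$ corresponding to $\omega$ is trivial on $W_E$ (because $\omega\circ N_{E/F}=1$) and hence equals the non-trivial character of $W_F/W_E=\Gal(E/F)$. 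Combining these steps yields $\eta'=-\eta$, completing the argument.
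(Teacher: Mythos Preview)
Your proof is correct and follows essentially the same route as the paper: translate to parameters via Lemma~\ref{mok_res}, observe that twisting by $\chi_{-1}$ preserves conjugate self-duality and multiplies the parity by $\chi_{-1}(w_\tau^2)=-1$. The paper's own proof simply asserts the identity $\chi_{-1}(w_\tau^2)=-1$ without justification, whereas you supply a clean argument for it via the transfer map; this extra detail is welcome but does not constitute a different method.
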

\begin{proof}
 Let $\rho=\rec(\pi)$. Note that the representation $\rho$ is conjugate self-dual with a parity if and only if $\chi_{-1}\rho$ is conjugate self-dual with a parity. Moreover the parities differ by the factor $\chi_{-1}(w_{\tau}^{2})=-1$. The statement now follows from Lemma \ref{mok_res}.
\end{proof}

\section{Equivalence of Conjecture \ref{jac2} and Conjecture
  \ref{rafli2}}\label{s_equiv}

\subsection{A characterization of the rigid representations in the image of the base change maps}

\subsubsection{Existence of parity for conjugate self-dual rigid representations} 
\begin{lemma}\label{parity}
 Let $\pi\in \IrrE$ be rigid and let $\rho=\rec(\pi)$. If $\pi^{\vee}\cong\pi^{\tau}$, then $\rho$ is conjugate self-dual with a parity.
\end{lemma}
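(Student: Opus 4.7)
The plan is to deduce the existence of a parity from the Zelevinsky recipe for $\rec(\pi)$ combined with a short parity computation enabled by the rigidity hypothesis. By \Cref{rmk:SD}, the assumption $\pi^{\vee}\cong\pi^{\tau}$ already yields that $\rho=\rec(\pi)$ is conjugate self-dual, so only the existence of a parity is in doubt. By \Cref{rmk:parity_gen}, this reduces to showing that all the conjugate self-dual irreducible constituents of $\rho$ share a common parity.

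Write $\pi=L(\fmm)$ with $\fmm=\{[\nu^{a_{i}}\sigma,\nu^{b_{i}}\sigma]:1\le i\le t\}$ supported on a single cuspidal line $\sigma^{\Z}$, and put $\tilde\sigma:=\rec(\sigma)$. The Langlands reciprocity formula recalled in \S\ref{s_pri_gal} gives
\[
\rho=\bigoplus_{i=1}^{t}\nu^{c_{i}}\tilde\sigma\otimes \Sp(m_{i}),\qquad c_{i}:=\tfrac{a_{i}+b_{i}}{2},\ m_{i}:=b_{i}-a_{i}+1.
\]
Since $\pi^{\vee}\cong\pi^{\tau}$ are both nonzero rigid representations, their cuspidal supports must lie on a common cuspidal line, forcing $(\sigma^{\vee})^{\Z}=(\sigma^{\tau})^{\Z}$. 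Hence there exists $k\in\Z$ with $\sigma^{\tau}\cong\nu^{-k}\sigma^{\vee}$, and therefore (by \Cref{thm:rec_gal}) $\tilde\sigma^{\tau}\cong\nu^{-k}\tilde\sigma^{\vee}$. A direct comparison of $(\nu^{c_{i}}\tilde\sigma\otimes\Sp(m_{i}))^{\vee}=\nu^{-c_{i}}\tilde\sigma^{\vee}\otimes\Sp(m_{i})$ and $(\nu^{c_{i}}\tilde\sigma\otimes\Sp(m_{i}))^{\tau}=\nu^{c_{i}-k}\tilde\sigma^{\vee}\otimes\Sp(m_{i})$ shows that the $i$-th summand is conjugate self-dual precisely when $c_{i}=k/2$; in particular all such summands share the same Weil part $\nu^{k/2}\tilde\sigma$.

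The key observation is now a parity constraint on the relevant $m_{i}$'s: the equation $a_{i}+b_{i}=k$ combined with $m_{i}=b_{i}-a_{i}+1$ forces $m_{i}\equiv k-1\pmod 2$. Thus $(-1)^{m_{i}-1}=(-1)^{k}$ is \emph{constant} across all conjugate self-dual constituents of $\rho$. Invoking the standard parity formula --- namely that if $\mu$ is a conjugate self-dual irreducible representation of $W_{E}$ of parity $\eta_{0}$ then $\mu\otimes\Sp(m)$ has parity $\eta_{0}\cdot(-1)^{m-1}$ --- we conclude that every conjugate self-dual constituent of $\rho$ has the same parity $\eta_{0}\cdot(-1)^{k}$, where $\eta_{0}$ is the unique parity of the irreducible $\nu^{k/2}\tilde\sigma$. \Cref{rmk:parity_gen} then produces the required parity for $\rho$.

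The main obstacle in executing this plan is establishing the parity formula $\eta_{0}\cdot(-1)^{m-1}$ for $\mu\otimes\Sp(m)$, which reflects the orthogonal/symplectic character of $\Sp(m)$ according to the parity of $m$; once this fact (together with the observation that conjugate self-dual summands appearing with even multiplicity can be absorbed into the swap pairs of \Cref{rmk:parity_gen}) is in place, the structural input coming from rigidity carries the argument through.
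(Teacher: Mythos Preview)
Your proposal is correct and follows essentially the same route as the paper's proof. The only cosmetic difference is that the paper replaces $\sigma$ by the shifted representation $\nu^{k/2}\sigma$ at the outset (so that the base cuspidal is itself conjugate self-dual and the parameter $k$ disappears), whereas you keep $k$ explicit; in both versions the crux is that all conjugate self-dual constituents are of the form $\rho_0\otimes\Sp(m)$ with $m$ of fixed parity, and the paper, like you, relies on (without spelling out) the tensor formula $\eta(\mu\otimes\Sp(m))=\eta(\mu)\cdot(-1)^{m-1}$ to conclude.
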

\begin{proof}
By \Cref{rmk:SD}, $\rho$ is conjugate self-dual. We will show now that the rigidity of $\pi$ implies that $\rho$ has a parity. Let $\supp(\pi)\subseteq (\sigma')^{\Z}$.  Write
 \begin{equation*}
 \pi=L([\nu^{a_{1}'}\sigma',\nu^{b_{1}'}\sigma'], \dots,[\nu^{a_{t}'}\sigma',\nu^{b_{t}'}\sigma'])
 \end{equation*}
where $a_{i}', b_{i}'\in \mathbb Z$. Since $\pi^{\vee}\cong\pi^{\tau}$, $\supp(\pi^\vee) = \supp(\pi^\tau)$ which implies that $(\sigma')^{\tau}=\nu^{a}(\sigma')^{\vee}$ for some integer $a$. Let $\sigma=\nu^{-\frac{a}{2}}\sigma'$ so that $\sigma$ is conjugate self-dual. Let $a_{i}=a_{i}'+\frac{a}{2}$ and $b_{i}=b_{i}'+\frac{a}{2}$ for all $i\in \set{1,\dots, t}$. 
Write $\rho = \bigoplus_i \rho_i $ with 
\[
\rho_i = \rec(\nu^{\frac{a_{i}+b_{i}}{2}}\sigma)\otimes \Sp(b_{i}-a_{i}+1).
\]
Note that $\rho_i$ is conjugate self-dual if and only if $a_i+b_i = 0$, i.e $\rho_i = \rho_0\otimes \Sp(2b_i+1)$ where $\rho_0 = \rec(\sigma)$. Let $\eta_0$ be the parity of $\sigma$. Note that the exponents $b_{i}$ are either all in $\mathbb Z$ or all in $\frac{1}{2}+\mathbb Z$, depending on whether $a$ is even or odd. Therefore all conjugate self-dual irreducible components in $\rho$ have the same parity. \Cref{rmk:parity_gen} now implies that $\rho$ has a parity.
\end{proof}

\subsubsection{}
\begin{prop}\label{equiv}
  For a rigid irreducible representation $\pi$ of $G_{n}$, the following two
  statements are equivalent:
\begin{enumerate}[(i)]
\item $\pi^{\vee}\cong \pi^{\tau}$
\item $\pi$ lies in the union of the images of the stable base change map and
  the unstable base change map
\end{enumerate}
In particular, \Cref{jac2} and \Cref{rafli2} are equivalent for rigid irreducible representations.
\end{prop}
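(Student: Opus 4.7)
The plan is to deduce the proposition essentially as a bookkeeping exercise combining three results already at our disposal: \Cref{mok_res} (describing the images of the two base change maps purely in terms of parity of $\rec(\pi)$), \Cref{parity} just proven (granting existence of a parity in the rigid self-dual case), and \Cref{rmk:SD} (translating $\pi^{\vee}\cong\pi^{\tau}$ into conjugate self-duality of $\rec(\pi)$). Most of the real work has already been invested in \Cref{parity}; what remains is essentially linguistic.

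For the direction (ii) $\Rightarrow$ (i), I would simply observe that if $\pi$ lies in the image of either base change map, then by \Cref{mok_res} the parameter $\rec(\pi)$ is conjugate self-dual (with parity $(-1)^{n-1}$ or $(-1)^{n}$). Forgetting the parity, conjugate self-duality of $\rec(\pi)$ is equivalent to $\pi^{\vee}\cong \pi^{\tau}$ by \Cref{rmk:SD}. Note that this direction does not use rigidity of $\pi$.

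For the direction (i) $\Rightarrow$ (ii), which is where rigidity enters, I would assume $\pi^{\vee}\cong\pi^{\tau}$ and set $\rho := \rec(\pi)$. By \Cref{rmk:SD}, $\rho$ is conjugate self-dual. Applying \Cref{parity}, which exploits the rigidity hypothesis, $\rho$ admits \emph{some} parity $\eta\in\{+1,-1\}$. The key final remark is that $\{+1,-1\} = \{(-1)^{n-1},(-1)^{n}\}$, so $\eta$ must equal one of these two values; by \Cref{mok_res}, $\pi$ then lies in the image of the stable base change map (if $\eta=(-1)^{n-1}$) or of the unstable one (if $\eta=(-1)^{n}$). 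There is no serious obstacle here beyond noting that the two values a parity can take exhaust the two cases offered by \Cref{mok_res}.

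The ``in particular'' assertion regarding \Cref{jac2} and \Cref{rafli2} is then immediate: the two conjectures have the same right-hand side (``at least one of $\pi$ and $\chi_{-1}\pi$ is $\GL_{n}(F)$-distinguished''), while their left-hand sides, namely $\pi^{\vee}\cong\pi^{\tau}$ and ``$\pi$ lies in the union of the two base change images'', are precisely (i) and (ii) in the proposition and hence equivalent. The only subtlety worth flagging in the write-up is that \Cref{parity} really does require rigidity (via the argument that all conjugate self-dual irreducible summands of $\rho$ end up with matching parities inherited from a single cuspidal $\sigma$), so the proposition would fail without this hypothesis, as foreshadowed by the $\GL_3(E)$ counterexample in the introduction.
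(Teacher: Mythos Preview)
Your proposal is correct and follows essentially the same approach as the paper's proof: both combine \Cref{mok_res}, \Cref{parity}, and the local Langlands correspondence (via \Cref{rmk:SD}) to conclude. The paper's version is simply more compressed, phrasing the argument as a single chain of equivalences rather than treating the two directions separately.
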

\begin{proof}
  Let $\rho=\rec(\pi)$. By \Cref{mok_res} $\pi$ lies in the union of the images
  of the two base change maps if and only if $\rho$ is a conjugate self-dual
  representation of $W_{E}'$ with a parity. Using \Cref{parity} and the local
  Langlands correspondence for $G_{n}$, we see that this is equivalent to the
  condition $\pi^{\vee}\cong \pi^{\tau}$.
\end{proof}

\subsection{The image of the base change maps and Zelevinsky involution}
\label{ss_bc_zel}
\subsubsection{} Our characterization leads to an interesting application.
\begin{proposition}\label{zi_prop}
Let $\pi\in \IrrE$ be a rigid representation in the union of the image of the two base change maps. Then $\pi^{t}$ is also in the union of the image of the base change maps.
\end{proposition}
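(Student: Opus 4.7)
The plan is to reduce the statement to the characterization provided by Proposition~\ref{equiv} (i.e.\ \Cref{equiv_int}) and then verify the symmetry condition $\pi^\vee \cong \pi^\tau$ is preserved under the Zelevinsky involution.

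More concretely, write $\pi = Z(\fmm)$ so that $\pi^t = L(\fmm)$. By Proposition~\ref{equiv}, the hypothesis that $\pi$ lies in the union of the images of the two base change maps is equivalent to $\pi^\vee \cong \pi^\tau$. The Zelevinsky involution preserves cuspidal support, so $\pi^t$ is again rigid; therefore, by Proposition~\ref{equiv} applied to $\pi^t$, it suffices to prove $(\pi^t)^\vee \cong (\pi^t)^\tau$.

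The key point is that both the contragredient and the Galois twist commute with the Zelevinsky involution at the level of multi-segments. Using the identities $Z(\fmm)^\vee \cong Z(\fmm^\vee)$, $L(\fmm)^\vee \cong L(\fmm^\vee)$, $Z(\fmm)^\tau \cong Z(\fmm^\tau)$, and $L(\fmm)^\tau \cong L(\fmm^\tau)$ recalled in \S\ref{s_irr_gln}, one gets
\begin{equation*}
(\pi^t)^\vee = L(\fmm)^\vee \cong L(\fmm^\vee) = Z(\fmm^\vee)^t \cong (Z(\fmm)^\vee)^t = (\pi^\vee)^t,
\end{equation*}
and analogously $(\pi^t)^\tau \cong (\pi^\tau)^t$. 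Thus the isomorphism $\pi^\vee \cong \pi^\tau$ yields $(\pi^\vee)^t \cong (\pi^\tau)^t$, hence $(\pi^t)^\vee \cong (\pi^t)^\tau$, which is what we need.

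This proof is essentially formal once Proposition~\ref{equiv} is available; there is no real obstacle. The only thing to double-check is that $\pi^t$ is rigid, but this is immediate since the Zelevinsky involution (described via the Mœglin--Waldspurger algorithm in \S\ref{mwalgo}) only rearranges segments supported on the same cuspidal line $\sigma^\Z$.
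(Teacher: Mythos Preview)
Your proof is correct and follows essentially the same approach as the paper's: both reduce to Proposition~\ref{equiv} and then verify that the conjugate self-duality condition $\pi^\vee\cong\pi^\tau$ is preserved under the Zelevinsky involution, using the compatibility of $Z(\cdot)$ and $L(\cdot)$ with contragredient and Galois twist. The only cosmetic difference is that the paper writes $\pi=L(\fmm)$, $\pi^t=Z(\fmm)$ while you write $\pi=Z(\fmm)$, $\pi^t=L(\fmm)$, and you spell out the rigidity of $\pi^t$ explicitly.
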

\begin{proof}
Let $\pi=L(\fmm)$. Then $\pi^{t}=Z(\fmm)$ and we have
\begin{equation*}
  ((\pi^{t})^{\vee})^{\tau}\cong (Z(\fmm)^{\vee})^{\tau} \cong Z((\fmm^{\vee})^{\tau}) \cong L((\fmm^{\vee})^{\tau})^{t}\cong ((\pi^{\vee})^{\tau})^{t} \cong \pi^{t}.
\end{equation*}
The statement now follows from  Proposition \ref{equiv}.
\end{proof}

\subsubsection{} The image of an individual base change map is not preserved by the Zelevinsky involution as demonstrated in our next remark.
\begin{remark}\label{zi_prop1}
  Let $\pi=L([\nu^{-\frac{1}{2}}][\nu^{\frac{1}{2}}])$. The parameter
  $\rec(\pi)$ is conjugate self-dual with both the parities, and thus by Lemma
  \ref{mok_res}, is in the image of the unstable base change map. On the other
  hand, $\pi^{t}=L([\nu^{-\frac{1}{2}},\nu^{\frac{1}{2}}])$ is in the image of
  only the stable base change map.
\end{remark}

\subsubsection{} In view of Remark \ref{zi_prop1}, we can make the following
weaker statement:
\begin{proposition}\label{zi_prop2}
 Let $\pi\in \IrrE$ be a rigid representation in the image of exactly one of the two base change maps. Then $\pi^{t}$ also lies in the image of that base change map.
\end{proposition}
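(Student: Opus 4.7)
The plan is to refine the parity computation from the proof of \Cref{parity} in order to isolate a parity invariant that is preserved by the Zelevinsky involution. Specifically, the proof of \Cref{parity} shows that, after normalizing the cuspidal line so that $\sigma \in \supp(\pi)$ is conjugate self-dual with parity $\eta_{0}$, every irreducible conjugate self-dual summand $\rec(\nu^{(a_{i}+b_{i})/2}\sigma)\otimes \Sp(b_{i}-a_{i}+1)$ of $\rec(\pi)$ has the common parity $\eta_{0}\cdot (-1)^{2b_{i}}$, which equals $\eta_{0}$ if the exponents of $\nu$ on $\sigma$ are integers and $-\eta_{0}$ if they are half-integers. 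Denote this common value by $\eta(\pi)$. Since $\sigma$, its parity $\eta_{0}$, and the integrality coset of the exponents all depend only on $\supp(\pi)$, and since $\supp(\pi)=\supp(\pi^{t})$ with $n$ preserved by the Zelevinsky involution, we have $\eta(\pi^{t})=\eta(\pi)$.

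Combining this with \Cref{rmk:parity_gen} and \Cref{mok_res}, I would next record: if $\rec(\pi)$ has a unique parity then that parity equals $\eta(\pi)$, and $\pi$ lies in the image of the stable (resp.\ unstable) base change map iff $\rec(\pi)$ has parity $(-1)^{n-1}$ (resp.\ $(-1)^{n}$).

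The proof then proceeds by a short case analysis. Suppose $\pi$ lies in the image of exactly one of the two base change maps; then $\rec(\pi)$ has a unique parity $\eta \in \{(-1)^{n-1}, (-1)^{n}\}$, and $\eta=\eta(\pi)$. By \Cref{zi_prop}, $\pi^{t}$ lies in the union of the two images, so $\rec(\pi^{t})$ is conjugate self-dual with at least one parity. If $\rec(\pi^{t})$ has both parities, then in particular it has parity $\eta$, so $\pi^{t}$ is in the same base change image as $\pi$. If $\rec(\pi^{t})$ has a unique parity, that parity equals $\eta(\pi^{t})=\eta(\pi)=\eta$, and again $\pi^{t}$ lies in the same image as $\pi$.

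The main obstacle is to extract from the proof of \Cref{parity} the fact that the common parity of the conjugate self-dual summands of $\rec(\pi)$ depends only on $\supp(\pi)$ and on no finer data of $\pi$; once this support-only invariant $\eta(\pi)$ is isolated, the rest of the argument is essentially a two-line case analysis using \Cref{zi_prop}, \Cref{mok_res}, and \Cref{rmk:parity_gen}. Notably, one does \emph{not} need to rule out the possibility that $\rec(\pi^{t})$ has both parities (which can happen, as in the reverse direction of the example in \Cref{zi_prop1}); in that case $\pi^{t}$ lies in the desired image automatically.
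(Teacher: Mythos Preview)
Your proof is correct and follows essentially the same approach as the paper's: both identify that the common parity of the conjugate self-dual irreducible summands of $\rec(\pi)$ is determined by the cuspidal line and the integrality coset of the $\nu$-exponents, and both use that the Zelevinsky involution preserves these data. Your version packages this as a support-only invariant $\eta(\pi)$, whereas the paper picks a specific summand $\rho''=\rho_{0}\otimes\Sp(m)$ of $\rec(\pi)$ and compares it directly to a summand $\rho_{0}\otimes\Sp(m_{t})$ of $\rec(\pi^{t})$, observing that $m-m_{t}$ is even; the content is identical.
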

\begin{proof}
By Proposition \ref{equiv} we get that $\pi^{\vee}\cong \pi^{\tau}$. Let $\rho=\rec(\pi)$. By \Cref{rmk:parity_gen}, there exists a conjugate self-dual $W'_E$-representation $\rho''$ that appears in the decomposition \eqref{eq:rho_factor} of $\rho$ with odd multiplicity. Write $\rho''=\rho_0\otimes \Sp(m)$ where $\rho_{0}$ is conjugate self-dual. 

Let $\rho^t=\rec(\pi^t)$. By \Cref{zi_prop}, $\rho^t$ is also conjugate self-dual and has a similar decomposition 
\[
\rho^t = \bigoplus_{1\leq i \leq r_{t}}\left(\rho'^t_{i}\oplus ((\rho'^t_i)^{\tau})^{\vee} \right)\oplus \bigoplus_{1\leq j \leq s_{t}}\rho''^t_{j}.
\]
If $s_t=0$, then $\rho^t$ is in the image both of stable and unstable base change and we have the proposition. Thus assume that $s_{t}\geq 1$. Let $\rho''^{t}_{1}=\rho_{0}^{t}\otimes \Sp(m_{t})$. Since the M\oe{}glin-Waldspurger algorithm to obtain Zelevinsky involution of a rigid representation preserves the cuspidal line it's supported in, we conclude that $\rho^{t}_{0}=\rho_{0}$ and $m-m_{t}$ is even. Thus $\rho''^{t}_{1}$ is of the same parity as $\rho''$. Applying \Cref{rmk:parity_gen} once again we obtain the proposition.
\end{proof} 

\section{Relationship between distinction and base change for ladders}\label{s_dist_bc_ladder}
\subsubsection{Ladder representations}\label{sss: ladder}
\begin{definition}\label{def_lad}
  Let $\sigma\in \CuspE$. The multi-set $\{\Delta_1,\dots,\Delta_k\}\in\cO_\sigma$ is called a \emph{ladder} if
\begin{equation*}
  b(\Delta_1)>\cdots>b(\Delta_k)\quad  
  \text{and}\quad e(\Delta_1)>\cdots>e(\Delta_k).
\end{equation*}
A representation $\pi\in \IrrE$ is called a ladder representation if
$\pi=L(\fmm)$ where $\fmm\in \cO_\sigma$ is a ladder.
\end{definition}
Whenever we say that $\fmm=\{\Delta_1,\dots,\Delta_k\}\in\cO_\rho$ is a ladder,
we implicitly assume that $\fmm$ is already ordered as in the definition above.
\begin{remark}
The M\oe{}glin-Waldspurger algorithm to compute the Zelevinsky involution takes a particularly simple form if $\fmm$ is a ladder, as described in \cite[\S 3.2]{LM}.
\end{remark}
\begin{definition}\label{def: prop l}
  A ladder, $\fmm=\{\Delta_1,\dots,\Delta_k\}\in\cO_\sigma$ is called a
  \emph{proper ladder} if $\Delta_{i+1}\prec\Delta_i$, $i=1,\dots, k-1$. If
  $\fmm$ is a proper ladder then $L(\fmm)$ is called a proper ladder
  representation.
\end{definition}
Note that if $\pi$ is a ladder representation, then it can be decomposed as
$\pi=\pi_{1}\times \cdots\times \pi_{k}$ where each $\pi_{i}$ is a proper ladder
representation. This decomposition is unique up to a permutation of the
$\pi_{i}$'s appearing in the product.

\subsection{Conjecture \ref{jac2} in the ladder case}\label{ss_ladder_dis}
Next we recall the main results of \cite{G} concerning the class of ladder representations.
\subsubsection{Definition of $\gamma$} \label{sec:def_gamma}
Before we state them we need to introduce some more notation. Let $\pi\in \IrrE$ such that $\pi^{\vee}\cong \pi^{\tau}$. Suppose that $\supp(\pi)\subseteq \sigma^{\Z}$ for some $\sigma \in \CuspE$. By the argument in the first paragraph of the proof of \Cref{parity}, there exists $\sigma'\in \sigma^{\Z}\sqcup (\nu^{\frac{1}{2}}\sigma)^{\Z}$ such that $(\sigma')^{\vee}\cong (\sigma')^{\tau}$. Note that both the cuspidal lines
$\sigma^{\Z}$ and $ (\nu^{\frac{1}{2}}\sigma)^{\Z}$ are invariant under the
action $\bullet \mapsto (\bullet)^{\tau\vee} $ of twisting by $\tau$ and then taking contragredient. 

  By \cite[Theorem 7]{K}, there exists an integer $a_{\sigma'}\in \{0,1\}$ such
  that $\sigma'$ is $(H,\omega^{a_{\sigma'}})$-distinguished. This integer is
  unique by \cite[Corollary 1.6]{AKT}. 

 We then make the following definition:
\begin{defn}
  Let $\sigma\in \CuspE$ and suppose that there exists
  $\sigma'\in \sigma^{\Z}\sqcup (\nu^{\frac{1}{2}}\sigma)^{\Z}$ such that
  $(\sigma')^{\vee}\cong (\sigma')^{\tau}$. Define
  $\gamma(\sigma)=\gamma(\sigma^{\Z})\in \{0,1\}$ to be
\begin{equation*}
\gamma(\sigma)= \Bigg\{ \begin{array}{ll}
a_{\sigma'}& {\rm if}\  \sigma'\in \sigma^{\Z}\\
1-a_{\sigma'} & {\rm if}\ \sigma' \in (\nu^{\frac{1}{2}}\sigma)^{\Z}.
\end{array}
\end{equation*}
\end{defn}
\subsubsection{Distinguished ladders}
We recollect \cite[Theorem~4.2 and Theorem~4.3]{G}:
\begin{theorem}\label{j_ladder}
  Let $\pi=L(\fmm)\in \IrrE$ be a ladder representation such that
  $\supp(\pi)\in \sigma^{\Z}$ and $\pi^{\tau}\cong \pi^{\vee}$. Write
  $\pi=\pi_{1}\times \cdots\times \pi_{k}$ with each $\pi_{i}$ a proper ladder
  representation. Let $\abs{\fmm}=t$. Then we have:
\begin{enumerate}
\item If $k$ is even, then $\pi$ is both $H$-distinguished and
  $(H,\omega)$-distinguished.\\
\item In $k$ is odd, $\pi$ is
  $(H,\omega^{\gamma(\sigma)+t+1})$-distinguished. Moreover, it cannot be both
  $H$-distinguished and $(H,\omega)$-distinguished.
\end{enumerate}
In particular, Conjecture \ref{jac2} holds for the class of ladder
representations.  
\end{theorem}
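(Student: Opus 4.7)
The plan is to exploit the decomposition of $\pi$ into proper ladder components together with the hereditary property of distinction under irreducible parabolic induction. First, by the Zelevinsky classification and \Cref{thm:rec_gal}, the assumption $\pi^\vee \cong \pi^\tau$ is equivalent to $\fmm = \fmm^{\tau\vee}$ as multi-sets. Since the decomposition $\pi = \pi_1\times\cdots\times\pi_k$ into proper ladders is unique up to permutation, and since the involution $(\cdot)^{\tau\vee}$ carries proper ladders to proper ladders and preserves the cuspidal line, it acts on the set $\{\pi_1,\dots,\pi_k\}$ as an involution. I would check, using that a proper ladder fixed by $(\cdot)^{\tau\vee}$ forces its segments to pair as $\Delta_i^{\tau\vee}=\Delta_{m+1-i}$, that at most one fixed point may exist: any second fixed one would break the ladder ordering when combined with the first. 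This yields pairs alone when $k$ is even and a single fixed proper ladder plus pairs when $k$ is odd.

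For the pair case, I would use the classical fact that for any irreducible $\rho\in \IrrE$, the induced representation $\rho\times (\rho^\vee)^\tau$ is simultaneously $H$-distinguished and $(H,\omega)$-distinguished. This follows from a Mackey-type analysis of the double coset space of an appropriate maximal parabolic against $H$: the open orbit has stabilizer conjugate to the diagonal $H_m$ twisted by $\tau$, yielding the canonical evaluation pairing as a non-zero element of $\Hom_H$; the twist by $\chi_{-1}$ (whose restriction to $F^\times$ is $\omega$) produces a non-zero element of $\Hom_H(-,\omega)$ by the same mechanism. Hereditary property (which applies thanks to irreducibility of the parabolic induction in the ladder setting) then assembles the pair contributions into distinction of $\pi$ in both characters when $k$ is even, settling part $(1)$.

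For part $(2)$, when $k$ is odd I would isolate the central self-dual proper ladder $\pi_0$ and reduce to computing its distinguishing character, since the paired factors contribute trivially (they produce functionals in both characters). I would argue by induction on the length $m$ of $\pi_0$. At each step I would peel off the outermost symmetric pair of segments $\Delta_1, \Delta_m^{\tau\vee}=\Delta_1$ using Jacquet-module analysis à la Bernstein--Zelevinsky, reducing the distinction question for $\pi_0$ to the distinction question for a shorter self-dual proper ladder while tracking the parity shift. Each such step changes the character by $\omega$ (because the peeled symmetric segment contributes as a pair of the type handled above, twisted by the correct $\nu$-power relating $\sigma$ and $\sigma^{\tau\vee}$), so iterating gives a total shift of $t+1 \pmod 2$ on top of the cuspidal-level contribution $\gamma(\sigma)$, recovering the exponent $\gamma(\sigma)+t+1$.

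The main obstacle, as I see it, is the exclusivity assertion in the odd case: that $\pi$ cannot be both $H$- and $(H,\omega)$-distinguished. This is a multiplicity-one statement and I would derive it from \cite[Corollary~1.6]{AKT}, which gives uniqueness of the distinguishing character for conjugate self-dual cuspidal representations. The propagation of uniqueness from the cuspidal level through the Jacquet-module induction step requires care: one must check that each reduction preserves one-dimensionality of the space of twisted invariant functionals. A clean way to package this is to identify, via Bernstein--Zelevinsky filtrations, the relevant Hom spaces on both sides and show they agree, which in turn forces that if $\pi$ admitted distinction in both characters the same would propagate down to the cuspidal support, contradicting \cite{AKT}.
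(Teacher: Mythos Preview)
First, note that this theorem is not proved in the paper at all: it is quoted verbatim from \cite[Theorems~4.2 and 4.3]{G}. So there is no ``paper's own proof'' to compare against here; the reference does the work.

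On the merits of your sketch: the structural reduction is sound. The involution $\Delta\mapsto(\Delta^\vee)^\tau$ is forced to act on the segments of a conjugate self-dual ladder by $\Delta_i\mapsto\Delta_{t+1-i}$, and since linkedness is preserved this permutes the proper-ladder blocks symmetrically, giving pairs (with a single fixed block exactly when $k$ is odd). Your treatment of the paired blocks via $\rho\times(\rho^\vee)^\tau$ being both $H$- and $(H,\omega)$-distinguished is correct and, combined with the hereditary property and commutativity of the product, disposes of part~(1).

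The genuine gap is your handling of the central self-dual proper ladder $\pi_0$ in part~(2). Two issues. First, the ``peel off $\Delta_1,\Delta_m$ via Jacquet modules'' step is asserted, not argued: $\pi_0$ is a proper quotient of $L(\Delta_1)\times L(\Delta_2,\dots,\Delta_{m-1})\times L(\Delta_m)$, not equal to it, so you cannot simply invoke heredity. One needs a careful orbit-by-orbit analysis of the functionals (this is in fact the substantial part of \cite{G}, which passes through standard modules rather than a naive induction on $m$). Second, your parity bookkeeping is internally inconsistent. You claim each peeling step ``changes the character by $\omega$'' because the peeled pair $L(\Delta_1)\times L(\Delta_m)\cong\rho\times(\rho^\vee)^\tau$ ``contributes as a pair of the type handled above'' --- but that type is distinguished in \emph{both} characters, which argues for \emph{no} shift, not a shift by $\omega$. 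And indeed the target formula $(H,\omega^{\gamma(\sigma)+t+1})$ depends only on $t\bmod 2$, which your induction (removing two segments at a time) preserves; so the character must stay fixed through the induction and the entire contribution $\gamma(\sigma)+t+1$ has to be read off from the base case, not accumulated along the way. As written, your recursion does not reproduce the exponent.

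The exclusivity statement is likewise not a consequence of propagating \cite[Corollary~1.6]{AKT} through Jacquet modules in the way you describe; in \cite{G} it comes out of the same standard-module analysis, and your sketch does not supply an independent mechanism.
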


\subsection{ \Cref{rafli2} in the ladder case}
The fact that Conjecture \ref{rafli2} holds in the ladder case is an immediate consequence of Proposition \ref{equiv} and Theorem \ref{j_ladder}.  We now discuss some finer results for this class of representations.
\subsubsection{}\label{sss:ladder_t}
We retain the notation introduced in \Cref{sec:def_gamma}. Let $\pi=L(\fmm)$ such that $|\fmm|=t$. Write
\begin{equation*}
\fmm= \{[\nu^{a_{1}'}\sigma',\nu^{b_{1}'}\sigma'],\dots,[\nu^{a_{t}'}\sigma',\nu^{b_{t}'}\sigma']\}
\end{equation*}
where $a_{i}'>a_{i+1}', b_{i}'>b_{i+1}'$ for all $i=1,\dots, t-1$, and $\{a_{i}',b_{i}'\mid i=1,\dots,t\}$ is either a subset of $(\frac{1}{2}+\mathbb Z)$ or of $\mathbb Z$.
\begin{lemma}\label{t_even}
The representation $\pi$ described above lies in the intersection of both the base change maps if and only if $t$ is even.
\end{lemma}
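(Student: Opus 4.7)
My plan is to reduce the claim to a parity computation for the Langlands parameter $\rho = \rec(\pi)$, exploiting the decomposition from \Cref{rmk:parity_gen} together with \Cref{mok_res}. Since we are working under the standing assumption (inherited from \Cref{sec:def_gamma}) that $\pi^\vee \cong \pi^\tau$, \Cref{parity} guarantees $\rho$ conjugate self-dual with at least one parity, and I can write
\[
\rho = \bigoplus_{i=1}^t \rho_i, \qquad \rho_i = \rec(\nu^{(a_i'+b_i')/2}\sigma') \otimes \Sp(b_i' - a_i' + 1).
\]
A key observation I would record up front is that these $\rho_i$ are pairwise non-isomorphic irreducible $W'_E$-representations: the strict monotonicity $a_1' > \cdots > a_t'$ and $b_1' > \cdots > b_t'$ of a ladder forces all the shifts $(a_i'+b_i')/2$ to be distinct, and a cuspidal $\sigma'$ together with a Steinberg factor pins down an irreducible Weil--Deligne summand.

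First I would extract the combinatorial symmetry carried by the ladder. The isomorphism $\pi^\tau \cong \pi^\vee$ is equivalent to the multi-set equality $\fmm^\tau = \fmm^\vee$; using $(\sigma')^\tau \cong (\sigma')^\vee$ and the strict monotonicity of $\{a_i'\}$ and $\{b_i'\}$, the only reordering consistent with the ladder form forces
\[
a_i' = -b_{t+1-i}', \qquad b_i' = -a_{t+1-i}' \qquad (1 \leq i \leq t).
\]
A direct substitution then gives $\rho_{t+1-i} \cong (\rho_i^\tau)^\vee$ for every $i$, and moreover $\rho_i$ is conjugate self-dual precisely when $a_i' + b_i' = 0$, which by the pairing and strict monotonicity happens if and only if $i = t+1-i$, i.e.\ only when $t$ is odd and $i = (t+1)/2$.

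I then split on the parity of $t$. If $t$ is even, the summands group into $t/2$ pairs $\{\rho', ({\rho'}^\tau)^\vee\}$ with $\rho'$ not conjugate self-dual; in the canonical decomposition \eqref{eq:rho_factor} we have $s=0$, and \Cref{rmk:parity_gen} gives both parities for $\rho$, so \Cref{mok_res} places $\pi$ in both base change images. If $t$ is odd, the middle summand $\rho_{(t+1)/2}$ is irreducible and conjugate self-dual and appears with multiplicity exactly one (hence odd) by the pairwise non-isomorphism step, while the remaining $\rho_i$ still pair up; thus \eqref{eq:rho_factor} has $s \geq 1$, and $\rho$ has a unique parity (that of $\rho_{(t+1)/2}$, forced by Schur), so $\pi$ lies in exactly one of the two base change images and not in the intersection.

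The main delicate point will be the bookkeeping in the symmetry step: I need to be sure that the multi-set identity $\fmm^\tau=\fmm^\vee$ really forces the specific index pairing $i \leftrightarrow t+1-i$ (as opposed to some other matching), and that, because the $\rho_i$ are pairwise distinct, the decomposition of $\rho$ arising from the ladder coincides with the canonical decomposition \eqref{eq:rho_factor}, so that the `odd multiplicity' hypothesis of \Cref{rmk:parity_gen} is triggered precisely when $t$ is odd. Everything else is bookkeeping with parities.
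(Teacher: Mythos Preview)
Your proposal is correct and follows essentially the same route as the paper's proof: decompose $\rho=\rec(\pi)$ into the $t$ irreducible summands $\rho_i$, observe they are pairwise non-isomorphic because the ladder condition forces the central shifts $(a_i'+b_i')/2$ to be strictly decreasing, use $\fmm^\vee=\fmm^\tau$ to pair $\rho_i$ with $(\rho_{t+1-i}^\tau)^\vee$, and then read off from \Cref{rmk:parity_gen} that $s=0$ in \eqref{eq:rho_factor} precisely when $t$ is even. The paper compresses all of this into a few sentences and invokes \Cref{equiv} rather than \Cref{parity} at the outset, but the content is identical.
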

\begin{proof}
By Proposition \ref{equiv}, $\pi$ lies in the union of the images of the two base change maps. Let $\rho_{i}=\rec(L([\nu^{a_{i}'}\sigma',\nu^{b_{i}'}\sigma']))$. Thus $\rec(\pi)=\rho_{1}\oplus\cdots \oplus \rho_{t}$. By the definition of ladder representations, $\rho_{i}\ncong \rho_{j}$ if $i\neq j$. Moreover, since $\fmm^{\vee}\cong \fmm^{\tau}$, $(\rho_{i}^{\vee})^{\tau}\cong \rho_{t-i+1}$ for all $i$ and $\rho_{i}$ is conjugate self-dual if and only if $t$ is odd and $i=\frac{t+1}{2}$. \Cref{rmk:parity_gen} now gives the lemma.
\end{proof}

\subsubsection{} We get the following refined result in the ladder case using
\Cref{t_even} and \Cref{j_ladder}.
\begin{theorem}\label{rf_ladder}
Let $\pi=L(\fmm)\in \IrrE$ be a ladder representation where $\fmm \in \mathcal O_{\sigma}$. Let $|\fmm|=t$. Further let $\pi=\pi_{1}\times \cdots\times \pi_{k}$, where each $\pi_{i}$ is a proper ladder representation. Then we have:
\begin{enumerate}
\item In case $k$ is even, then for any $a\in \{0,1\}$, $\pi$ is $(H,\omega^{a})$-distinguished if and only if $\pi$ is in the intersection of the image of the two base change maps.\\
\item In case $k$ is odd and $t$ is even, $\pi$ is $(H,\omega^{\gamma(\sigma)+1})$-distinguished if and only if $\pi$ is in the image of either of the two base change maps. Also, in this case $\pi$ cannot be $(H,\omega^{\gamma(\sigma)})$-distinguished.\\
\item In case both $k$ and $t$ are odd, $\pi$ is $(H,\omega^{\gamma(\sigma)})$-distinguished if and only if $\pi$ is in the image of exactly one of the two base change maps, the type of which is determined by $\sigma$. Also, in this case $\pi$ cannot be $(H,\omega^{\gamma(\sigma)+1})$-distinguished.\\
\end{enumerate}
In particular, \Cref{rafli2} holds for the class of ladder representations.
\end{theorem}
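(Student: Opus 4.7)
The plan is to deduce \Cref{rf_ladder} by combining the distinction results of \Cref{j_ladder} with the intersection characterization of \Cref{t_even}, after first reducing to the hypothesis $\pi^{\vee}\cong\pi^{\tau}$. Both sides of each stated equivalence automatically force this isomorphism: any $(H,\omega^a)$-distinction yields $\pi^{\vee}\cong\pi^{\tau}$ by the standard Gelfand pair argument (in the $a=1$ case one rewrites $(H,\omega)$-distinction of $\pi$ as $H$-distinction of $\chi_{-1}\pi$, using the computation $\chi_{-1}\chi_{-1}^{\tau}=1$ on $E^\times$), while membership in either base change image gives it by \Cref{equiv}. So I may work throughout under the assumption $\pi^{\vee}\cong\pi^{\tau}$, and \Cref{j_ladder} is available.

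The bridge between \Cref{j_ladder} and \Cref{t_even} is a purely combinatorial observation I would establish first: $k$ even forces $t$ even. Renormalize $\sigma$ to be conjugate self-dual and write $\fmm=\{\Delta_1,\ldots,\Delta_t\}$ in ladder order with $\Delta_i=[\nu^{a_i}\sigma,\nu^{b_i}\sigma]$. The condition $\fmm^{\tau\vee}=\fmm$ translates to invariance of $\{(a_i,b_i)\}$ under $(a,b)\mapsto(-b,-a)$, and the strict ladder ordering forces this involution to act on indices by $i\mapsto t+1-i$, i.e.\ $a_{t+1-i}=-b_i$ and $b_{t+1-i}=-a_i$. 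The break criterion $a_i>b_{i+1}+1$ that separates consecutive segments into different proper ladders is then preserved setwise by the induced involution on $\{1,\ldots,t-1\}$, whose only possible fixed point is $i=t/2$ (occurring only when $t$ is even). Since $k$ equals one plus the number of breaks, breaks pair up away from the middle, so $k\equiv 1+\epsilon\pmod{2}$ with $\epsilon=1$ precisely when a middle break exists; in particular $k$ even forces $t$ even.

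The three cases now follow. In case~(1), $k$ even gives $t$ even, hence $\pi$ lies in the intersection of the two base change images by \Cref{t_even}, while \Cref{j_ladder}(1) supplies both $H$-distinction and $(H,\omega)$-distinction. In case~(2), $t$ even again gives intersection by \Cref{t_even}, and \Cref{j_ladder}(2) identifies the distinguishing exponent $\gamma(\sigma)+t+1\equiv\gamma(\sigma)+1\pmod{2}$, with the ``not both'' clause excluding $(H,\omega^{\gamma(\sigma)})$-distinction. In case~(3), $t$ odd gives exactly one base change image by \Cref{t_even}, and the exponent reduces to $\gamma(\sigma)$. The main obstacle — and the step requiring genuine computation — is the final assertion in case~(3) that the selected base change is determined by $\sigma$: this is a parity calculation on $\rec(\pi)=\bigoplus_i\rho_i$, in which only the central summand $\rho_{(t+1)/2}=\rec(\sigma)\otimes\Sp(m)$ (with $m=b_{(t+1)/2}-a_{(t+1)/2}+1$) is conjugate self-dual, and its parity equals the parity of $\sigma$ times $(-1)^{m-1}$. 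Matching this against $(-1)^{n-1}$ versus $(-1)^n$ via \Cref{mok_res} pins down which base change contains $\pi$ in terms of invariants of $\sigma$ alone.
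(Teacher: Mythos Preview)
Your proposal is correct and follows essentially the same approach as the paper: reduce to $\pi^{\vee}\cong\pi^{\tau}$ via Flicker's result (the paper cites \cite[Proposition~12]{F} directly rather than your twist-by-$\chi_{-1}$ reduction for the $a=1$ case, but these are equivalent), then combine \Cref{j_ladder} with \Cref{t_even} and \Cref{equiv}. You are considerably more explicit than the paper---it merely asserts that $k$ even forces $t$ even and then writes ``Parts (2) and (3) of the theorem can be proved in a similar manner. We omit the details''---so your combinatorial break-pairing argument and your parity sketch for case~(3) go beyond what the paper supplies; just note that to fully justify ``determined by $\sigma$ alone'' you should also record that $n\bmod 2$ and $m\bmod 2$ are themselves invariants of the cuspidal line (via the pairing $\Delta_i\leftrightarrow\Delta_{t+1-i}$ and the integer vs.\ half-integer dichotomy), which your sketch gestures at but does not spell out.
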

\begin{proof}
  We will prove (1). If $\pi$ is $(H,\omega^{a})$-distinguished, then we have
  $\pi^{\tau}\cong \pi^{\vee}$ (by \cite[Proposition
  12]{F}). Note that the fact $k$ is even forces $t$ to be even as well. The `only if' part of (1) now follows from \Cref{t_even}.

  For the other direction, suppose that $\pi$ is in the intersection of the
  images of the base change maps. Appealing to \Cref{equiv} and part (1) of
  Theorem \ref{j_ladder}, we get the statement.

  Parts (2) and (3) of the theorem can be proved in a similar manner. We omit
  the details.
\end{proof}

\section{The case of representations induced from ladders}\label{s_ind_ladder}
Now that we have the complete picture regarding distinction and its relation to the two base change maps for the class of ladder representations, we explore it further for the class of representations that are irreducibly induced from ladder representations.
\subsubsection{} The structure of a representation lying in this class can be described explicitly using \cite[Lemma 5.17, Lemma 5.21]{LM1}, which together give a combinatorial criterion to determine exactly when a representation induced from ladders is irreducible.

\subsection{Distinction in the mutually unlinked case}

\subsubsection{Geometrical lemma in this setup}\label{gl_mu}
Let $\pi_{i}\in \IrrE$ and set $\pi=\pi_{1}\times\dots\times \pi_{k}$ and $\la=\pi_{1}\otimes\dots\otimes \pi_{k}$. Let $M$ and $P$ be the standard Levi subgroup and the standard parabolic subgroup of $G_{n}$ corresponding to $\la$ respectively. The representations $\pi|_{H_{n}}$ is glued from representations that are parameterized by the orbits of the natural action of $P$ on the symmetric space $G_{n} / H_{n}$. Before we proceed further, we will briefly recall some preliminaries on the representatives and orbits corresponding to the double coset space $P\setminus G_{n} / H_{n}$. Our purpose here is to merely set up some notation that will be used in the proof of \Cref{conv_her_prop}. For details we refer the reader to \cite{JLR}.

Let $W$ and $W_{M}$ be the Weyl group of $G_{n}$ and $M$ respectively. Denote by $W[M]$ be the subset of representatives of the double coset $W_{M}\setminus W / W_{M}$ those are of minimal length and are of order at most two. The set $W[M]$ is in bijection with the double coset $P\setminus G_{n} / H_{n}$ (\cite[Proposition 20]{JLR}). For a $w\in W[M]$, the group $M(w):=M\cap wMw^{-1}$ is a standard Levi subgroup of $G_{n}$ contained in $M$. Thus we can enumerate the blocks of $M(w)$ in the following way:
\begin{equation}\label{eq:def_JM}
\fJ_{M(w)} \cong \fJ_{M(w),M} = \set{(i,j)|  i=1,\ldots,
k,\quad j=1,\ldots, s_i}.
\end{equation}
Clearly, for a block of the Levi subgroup $M(w)$, the first index of an element
of $\fJ_{M(w)}$ corresponds to the block of $M$ it sits in. We order
$\fJ_{M(w)}$ in the Lexicographic ordering. The condition that $w$ is
an involution and $M(w)$ is a standard Levi subgroup of $M$, induces an
involution $\epsilon_{w}$ on $\fJ_{M(w)}$ which satisfies the following
relation. For $1\leq i\leq k$ and $1\leq j_{1}< j_{2}\leq s_{i}$,
\begin{equation}\label{eq_gl_mu}
\epsilon_{w}(i,j_{1})=(i_{1}',j_{1}'), \epsilon_{w}(i,j_{2})=(i_{2}',j_{2}') \Longrightarrow  i_{1}'< i_{2}'.
\end{equation}

The only fact that we need for our purpose is as follows. If $\pi$ is
$H$-distinguished, then there exist a $w_{1}\in W[M]$, a Levi subgroup $M(w_1)$
of $M$, an involution $\epsilon_{w_1}$ on $\fJ_{M(w_1)}$, and an
irreducible component of the Jacquet module $r_{M(w_{1}),M}(\la)$, which we
denote by $\otimes_{(i,j)\in \fJ_{M(w_1)}} \sigma_{(i,j)}$, such that
$\sigma_{\epsilon_{w_1}(i,j)}\cong (\sigma^\vee_{(i,j)})^{\tau}$ for all
$(i,j)\in \fJ_{M(w_1)}$ and $\sigma_{(i,j)}$ is $H$-distinguished when
$\epsilon_{w_1}(i,j)=(i,j)$.

\subsubsection{Mutually unlinked representations}\label{sec:mut_unlink}
\begin{defn}\label{def_mut_unl}
We will say that $\pi_1= L(\gotM_1), \ldots, \pi_k= L(\gotM_k)\in \IrrE$ are mutually unlinked representations, if for every $i\neq j$, $\Delta\in \gotM_i$ and $\Delta'\in \gotM_j$, the pair of segments $\{\Delta, \Delta'\}$ is not linked.
\end{defn}
When $\pi_1, \ldots, \pi_k$ are mutually unlinked, $\pi=\pi_1 \times\cdots \times \pi_k$ is irreducible (see \cite[Proposition 8.5]{Z}) and $\pi\cong \pi_{w(1)}\times \cdots \times \pi_{w(k)}$ for every permutation $w\in \mathcal S_k$ (see \cite[Theorem 1.9]{Z}).

\subsubsection{} \Cref{conv_her_prop} provides a converse to the hereditary property of $H$-distinction for the representations in $\IrrE$ that are induced from ladders satisfying an additional hypothesis that they are mutually unlinked. To that end we need the
following auxiliary lemma.
\begin{lemma}\label{auxlem}
Suppose that $\pi\in \Irr_{E}$ is a proper ladder representation and let $\pi_1\otimes \pi_2$ be an irreducible component of the Jacquet module of $\pi$ with respect to the corresponding maximal parabolic. Further suppose that $\pi_1= L(\gotM_1)$ for a multisegment $\gotM_1$ such that $\gotM_1$ is a proper ladder and contains at least two segments. Then $\pi$ and $\pi_1$ are not mutually unlinked.
\end{lemma}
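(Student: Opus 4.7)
The plan is to exploit the explicit formula of Lapid--M\'inguez for the Jacquet module of a ladder representation, according to which each irreducible component of the Jacquet module of a ladder at a maximal parabolic is parametrized by a tuple of ``cuts''. Write $\pi=L(\gotN)$ with $\gotN=\{\Delta_1,\dots,\Delta_k\}$, and set $b_i=b(\Delta_i)$, $e_i=e(\Delta_i)$. Then any irreducible summand $\pi_1\otimes\pi_2$ of the relevant maximal Jacquet module has the form
\[
\pi_1=L\bigl(\{[b_i,c_i]:c_i\geq b_i\}\bigr),\qquad \pi_2=L\bigl(\{[c_i+1,e_i]:c_i<e_i\}\bigr),
\]
for cut values $b_i-1\leq c_i\leq e_i$ satisfying further inequalities that ensure both factors are ladders. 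Consequently the segments of $\gotM_1$ are precisely $\Delta'_s=[b_{i_s},c_{i_s}]$ for some increasing sequence $i_1<\cdots<i_r$ of indices in $\{1,\dots,k\}$, and the hypothesis $|\gotM_1|\geq 2$ is exactly $r\geq 2$.

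Next, fix two consecutive segments of $\gotM_1$ in the ladder order, say $\Delta'_s=[b_i,c_i]$ and $\Delta'_{s+1}=[b_j,c_j]$ with $i<j$. The proper-ladder condition on $\gotN$ gives $b_j<b_i$ and $e_j<e_i$, while the proper-ladder condition on $\gotM_1$ gives $\Delta'_{s+1}\prec\Delta'_s$, which (once the non-containments forced by $b_j<b_i$ are recorded) amounts to the single inequality $b_i\leq c_j+1$.

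The proof is then completed by a dichotomy on $c_i$ versus $e_j$. If $c_i>e_j$, I claim $\Delta_j\in\gotN$ and $\Delta'_s\in\gotM_1$ are linked: their union is a segment because $b_i\leq c_j+1\leq e_j+1$, and neither contains the other because $b_j<b_i$ while $e_j<c_i$. If instead $c_i\leq e_j$, I claim that $\Delta_i\in\gotN$ and $\Delta'_{s+1}\in\gotM_1$ are linked: their union is a segment by $b_i\leq c_j+1$, and non-containment holds by $b_j<b_i$ together with $c_j\leq e_j<e_i$. Either way one exhibits a pair of linked segments, one in $\gotN$ and one in $\gotM_1$, contradicting the mutual unlinkedness of $\pi$ and $\pi_1$.

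The principal obstacle is to set up the Jacquet-module description correctly; once the cut parametrization of Lapid--M\'inguez is in hand, the lemma reduces to elementary bookkeeping with interval endpoints. A small subtlety, easy to overlook, is that one must work with consecutive segments of $\gotM_1$ rather than consecutive indices of $\gotN$, since intermediate indices $i_s<i'<i_{s+1}$ with $c_{i'}=b_{i'}-1$ simply contribute no segment to $\gotM_1$, and without this care one's inequalities for ``properness'' in $\gotM_1$ would be misaligned.
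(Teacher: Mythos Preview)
Your approach is correct and is essentially the paper's: both invoke the Kret--Lapid description of the Jacquet module of a ladder and then exhibit a linked pair by elementary endpoint bookkeeping. Two remarks, however. First, in the convention of the paper (and of \cite{KL}), the \emph{first} factor $\pi_1$ of the Jacquet module receives the right sub-segments $[c_i+1,e_i]$ rather than the left sub-segments $[b_i,c_i]$; you have $\pi_1$ and $\pi_2$ interchanged. Second, your dichotomy on $c_i$ versus $e_j$ is superfluous: the conclusion of your Case~2, that $\Delta_i\in\gotN$ and $\Delta'_{s+1}\in\gotM_1$ are linked, holds unconditionally, since the only inequalities you use there are $b_i\leq c_j+1$, $b_j<b_i$, and $c_j\leq e_j<e_i$, none of which require $c_i\leq e_j$. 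Once the convention is corrected this single case becomes exactly the paper's two-line proof: given $\Delta'\prec\Delta$ in $\gotM_1$, take $\Delta''\in\gotM$ with $e(\Delta'')=e(\Delta')$ and $b(\Delta'')\leq b(\Delta')$ (guaranteed by \cite[Theorem~2.1]{KL}), and observe $\Delta''\prec\Delta$.
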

\begin{proof}
Suppose $\pi= L(\gotM)$ and that $\Delta'\prec \Delta$ are two segments in $\gotM_1$. By \cite[Theorem 2.1]{KL}, $\exists \ \Delta''\in \gotM$ such that $e(\Delta'')=e(\Delta')$ and $b(\Delta'')\leq b(\Delta')$. Hence $\Delta''\prec \Delta$.
\end{proof}

\subsubsection{}\label{sec:pf_h}
\begin{proposition}\label{conv_her_prop}
Suppose that $\pi_1,\ldots,\pi_k\in \Irr_{E}$ are mutually unlinked proper ladder representations. Then $\pi= \pi_1\times\cdots\times \pi_k\in \Irr_{E}$ is $H$-distinguished if and only if there is a permutation $w\in \mathcal S_k$ such that:
\begin{enumerate}
\item $\pi_{w(i)}\cong (\pi_i^\vee)^{\tau}$ for all $1\leq i\leq k$
\item For $i$ such that $w(i)=i$, $\pi_i$ is $H$-distinguished.
\end{enumerate}
\end{proposition}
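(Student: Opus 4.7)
The ``if'' direction is the standard hereditary property of $H$-distinction for induced representations. Given a permutation $w$ as in the statement, one decomposes $\pi$ via the $P$-orbit on $G_n/H_n$ associated to $w$ and constructs a nonzero $H_n$-invariant functional on $\pi$ by gluing together the canonical pairings $\pi_i \otimes (\pi_i^{\vee})^{\tau} \to \bC$ (for $w(i) \neq i$) with the hypothesized $H$-invariant functionals on the fixed-point factors $\pi_i$ (for $w(i) = i$). This type of argument originates in work of Blanc--Delorme and has already appeared in the present setting in, e.g., \cite{M,M1}.

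For the ``only if'' direction, I would apply the geometric-lemma machinery of \S\ref{gl_mu} to the $H_n$-distinguished representation $\pi = \ip_{G,M}(\lambda)$, where $\lambda = \pi_1 \otimes \cdots \otimes \pi_k$. This produces a $w_1 \in W[M]$ with associated standard Levi $M(w_1) \subseteq M$, an involution $\epsilon_{w_1}$ of $\fJ_{M(w_1)}$ satisfying \eqref{eq_gl_mu}, and an irreducible component $\bigotimes_{(i,j)} \sigma_{(i,j)}$ of $\jm_{M(w_1),M}(\lambda) = \bigotimes_i \jm_{M(w_1)_i, G_{n_i}}(\pi_i)$ with the properties that $\sigma_{\epsilon_{w_1}(i,j)} \cong (\sigma_{(i,j)}^\vee)^\tau$ for all $(i,j)$ and $\sigma_{(i,j)}$ is itself $H$-distinguished whenever $\epsilon_{w_1}(i,j) = (i,j)$. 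The central claim to establish is that $M(w_1) = M$, equivalently $s_i = 1$ for all $i$; once this is proven, $\epsilon_{w_1}$ is an involution on $\{1,\ldots,k\}$, which one takes as $w$, and the two conclusions (1) and (2) transcribe directly from the properties above.

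To prove $M(w_1) = M$, I would argue by contradiction. Suppose $s_{i_0} \geq 2$ for some $i_0$. Grouping factors and using transitivity of Jacquet modules, $\sigma_{(i_0,1)} \otimes \sigma'$ arises as an irreducible Jacquet module component of the proper ladder $\pi_{i_0}$ with respect to an appropriate maximal parabolic, where $\sigma'$ is an irreducible subquotient of $\sigma_{(i_0,2)} \times \cdots \times \sigma_{(i_0,s_{i_0})}$. By the results of \cite{KL}, each $\sigma_{(i_0, j)}$ is itself a ladder; decomposing $\sigma_{(i_0, 1)}$ into proper-ladder constituents brings me into position to invoke \Cref{auxlem}, which furnishes a segment $\Delta \in \gotM_{i_0}$ linked to a segment $\Delta'$ in the multisegment of a proper-ladder factor of $\sigma_{(i_0, 1)}$. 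On the other hand, \eqref{eq_gl_mu} forces $\epsilon_{w_1}(i_0, 1) = (i', j')$ with $i' \neq i_0$, so the involution condition identifies $(\sigma_{(i_0, 1)}^\vee)^\tau$ as an irreducible factor of the Jacquet module of $\pi_{i'}$; tracking the image of $\Delta'$ under $(\cdot)^{\vee\tau}$ through this identification should exhibit a segment of $\gotM_{i'}$ linked to a segment of $\gotM_{i_0}$, contradicting the mutual unlinking of $\pi_{i_0}$ and $\pi_{i'}$.

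The main obstacle is this final propagation step: \Cref{auxlem} produces a linking \emph{within} the proper ladder $\pi_{i_0}$, whereas mutual unlinking is a condition \emph{between} distinct $\pi_i$'s. Converting one into the other requires careful combinatorial bookkeeping of the exact positions of the segments inside the ladders and of how the cuspidal lines of the various $\pi_i$'s get matched by $(\cdot)^{\vee\tau}$, especially when $\pi_{i_0}$ and $\pi_{i'}$ lie on distinct cuspidal lines paired by the involution. I expect this bookkeeping, rather than any conceptual leap, to constitute the bulk of the technical work.
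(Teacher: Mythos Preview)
Your ``if'' direction is fine and matches the paper. The ``only if'' direction has a genuine gap in its central claim, and the paper in fact takes a different route.

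\textbf{The central claim is too strong.} You aim to show $M(w_1)=M$, i.e.\ $s_i=1$ for \emph{all} $i$. The paper does not prove this and does not need it. Instead, the paper first observes that $(\pi^\vee)^\tau\cong\pi$ already gives, at the level of multisegments, an involution $w\in\mathcal S_k$ with $(\gotM_i^\vee)^\tau=\gotM_{w(i)}$; this is Condition~(1) for free, without touching the geometric lemma. The geometric lemma is then invoked only to establish Condition~(2), and there the paper proves $s_i=1$ \emph{only for the conjugate self-dual indices} $i\in I\cup J$ (those with $(\gotM_i^\vee)^\tau=\gotM_i$). It then shows $\epsilon_{w_1}$ restricts to an involution on $I\cup J$ preserving the isomorphism class $I=\{i:\pi_i\cong\pi_{i_0}\}$; since $|I|$ may be assumed odd, some $\pi_{i'}\cong\pi_{i_0}$ is fixed and hence $H$-distinguished. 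Your attempt to extract $w$ directly from $\epsilon_{w_1}$ forces you into the harder claim.

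\textbf{Your contradiction sketch does not close.} Several steps fail as written. First, \eqref{eq_gl_mu} does \emph{not} force $\epsilon_{w_1}(i_0,1)=(i',j')$ with $i'\neq i_0$: it only says the first coordinates of $\epsilon_{w_1}(i_0,1)$ and $\epsilon_{w_1}(i_0,2)$ differ, so $(i_0,1)$ could be a fixed point. Second, \Cref{auxlem} requires the proper-ladder factor of $\sigma_{(i_0,1)}$ to have at least two segments; when $\sigma_{(i_0,1)}$ is a single segment (which happens, e.g., when $|\gotM_{i_0}|=1$) the lemma gives nothing. Third, and most seriously, the propagation you describe does not work: \Cref{auxlem} produces $\Delta\in\gotM_{i_0}$ linked to some $\Delta'$ in the multisegment of $\sigma_{(i_0,1)}$, and the relation $(\sigma_{(i_0,1)}^\vee)^\tau\cong\sigma_{(i',j')}$ places $(\Delta')^{\vee\tau}$ in the multisegment of a \emph{Jacquet constituent} of $\pi_{i'}$, not in $\gotM_{i'}$. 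Passing from there to a segment of $\gotM_{i'}$ via \cite{KL} only controls the end of the segment, and the resulting segment need not be linked to anything in $\gotM_{i_0}$; nor is $\Delta^{\vee\tau}\in\gotM_{i_0}$ unless $\pi_{i_0}$ is conjugate self-dual. The paper handles exactly these difficulties by a careful case analysis (distinguishing $|\gotM_{i_0}|=1$ from $|\gotM_{i_0}|>1$, introducing the auxiliary index sets $I,J,K$, and proving two separate claims about how $\epsilon_{w_1}$ interacts with them), and the argument there genuinely uses that the index under consideration lies in $I\cup J$. You should restructure along those lines: get $w$ first from the multisegment identity, then run the geometric-lemma analysis only on the conjugate self-dual block.
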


\begin{proof}
Suppose that a permutation such as in the statement exists. Renumbering the $\pi_{i}$'s if necessary, we get that
\begin{equation*}
  \pi\cong \pi_1\times \cdots \times \pi_{i_0} \times
  (\pi_{i_0+1} \times \cdots \times \pi_{k-i_0}) \times 
  (\pi_{i_0}^\vee)^{\tau}\times\cdots \times (\pi_1^\vee)^{\tau}
\end{equation*}
where $\pi_{i_0+1},\ldots, \pi_{k-i_0}$
are all $H$-distinguished. Hence $\pi_{i_0+1} \times \cdots \times \pi_{k-i_0}$
is $H$-distinguished (\cite[Proposition 26]{F1}) and the existence of a
non-trivial $H$-invariant functional on $\pi$ then follows from \cite[Theorem
2.8]{BD}.

\subsubsection{}\label{sec:pf_ch1}

Conversely, suppose that $\pi$ is an $H$-distinguished representation. Then, by
\cite[Proposition 12]{F}, we have $(\pi^\vee)^{\tau}\cong \pi$. Write
$\pi_i = L(\gotM_i)$ for all $i$. Then $\pi= L(\gotM_1+\cdots + \gotM_k)$, which
means
\begin{equation*}
(\gotM_1^\vee)^{\tau}+\cdots + (\gotM_k^\vee)^{\tau} = \gotM_1+ \cdots + \gotM_k.
\end{equation*}
In particular, for each $i$,
\begin{equation*}
\gotM_1 + \cdots + \gotM_k \geq (\gotM_{i}^{\vee})^{\tau}.
\end{equation*}
Since all pairs of segments in $\gotM_i$ are linked, we must have
$(\gotM_i^\vee)^{\tau}\leq \gotM_{w(i)}$ for some $1\leq w(i)\leq k$. It now
easily follows that $w$ can chosen to be an involution on $\{1,\ldots,k\}$ for
which $(\gotM_i^\vee)^{\tau} = \gotM_{w(i)}$. The first condition in the
statement is then clear.

Suppose that $i_0$ is a fixed point of $w$. It remains to be shown that
$\pi_{i_0}$ is an $H$-distinguished representation.

\subsubsection{} \label{sec:pf_ch2} 

Let us write $I\subset \{1,\ldots,k\}$ for the subset of indices $i$ for which $\pi_i\cong \pi_{i_0}$. Note that if $|I|$
is even, we can choose a permutation $w$ which satisfies Condition (1) on $I$
instead of being the identity. Thus in this case there is nothing to prove. So
assume that $|I|$ is odd.

We also define the following additional subsets of indices in $\{1,\ldots,k\}$:
\[
J = \set{i\in \set{1,\ldots,k}\setminus I| (\gotM_i^\vee)^{\tau} = \gotM_i,
\; \ |\gotM_i|>1}
\]
and
\[
K = \begin{cases}
\set{ i\in \{1,\ldots,k\}\setminus (I\cup J)\;\mid
    \; \exists \Delta\in \fmm_i \ \text{s.t. } \Delta_0\subset \Delta} &
    \text{if } \gotM_{i_0}=\{\Delta_0\} \\
    \emptyset & \text{if }    |\gotM_{i_0}|>1
  \end{cases}.
\]
Set $J'=J\cup I$ if $|\fmm_{i_{0}}|>1$ and $J'=J$ otherwise. Note that $J'$ is precisely the set of indices in $I\cup J$ such that $|\fmm_{i}|>1$. By
rearranging the $\pi_{i}$'s we can assume that $J = \set{1,\ldots,k_0}$,
$I = \{k_0+1,\ldots, k_1\}$ and $K = \set{k_1+1,\ldots, k_2}$ for some
$1\leq k_0 < k_1\leq k_2\leq k$.

\subsubsection{}\label{ss_conv_her_prop1}
Let $M$ be the Levi subgroup corresponding to
$\pi_{1}\otimes\cdots \otimes \pi_{k}$ and let $w_{1}$, $\epsilon_{w_{1}}$ and
$\fJ_{M(w_1)}$ be as defined in \S\ref{gl_mu}. Define
\[
L = \set{i\in I\cup J\,\mid\, s_i=1}.
\]

\begin{claim}\label{claim_ss_conv_her_prop1}
If $i\in L$ and $k_1< r \leq k$, then $\epsilon_{w_1}(i,1)\neq (r,1)$.
\end{claim}

Assume, if possible, the contrary for some $i\in L$ and $k_1<r \leq k$. Then we
have $\pi_{i}\cong (\pi_{i}^\vee)^{\tau}\cong \sigma_{(r,1)}$. If $i\in J'$,
then appealing to Lemma \ref{auxlem} we obtain that $\pi_{i}$ and $\pi_{r}$ are
not mutually unlinked which contradicts the hypothesis of the
statement. Therefore let us assume that $J'\neq I\cup J$ (i.e. $|\gotM_{i_0}|=1$), and $i\in I$. Thus 
$\gotM_{i}=\{\Delta_{0}\}$. By assumption $\Delta_{0}=(\Delta_{0}^{\vee})^{\tau}$. Since
$\sigma_{(r,1)}=L(\Delta_{0})$, by \cite[Theorem 2.1]{KL}, there is a segment
$\Delta'\in \gotM_{r}$ for which $e(\Delta')=e(\Delta_{0})$ and
$\Delta_{0} \subset \Delta'$.

Suppose first that $|\gotM_r| =1$. Since $r\not\in I$, $\Delta'\neq \Delta_{0}$ and
thus $\Delta_{0} \subsetneq \Delta'$. In particular,
$\Delta'\prec ((\Delta')^\vee)^{\tau}\in (\gotM_r^\vee)^{\tau} =
\gotM_{w(r)}$.
Clearly $\gotM_{w(r)}=\{((\Delta')^\vee)^{\tau}\}\neq \gotM_{r}$. This
contradicts the hypothesis that $\fmm_{r}$ and $\fmm_{w(r)}$ are mutually
unlinked.

We are left with the case when $|\gotM_r| >1$. Since $\pi_r$ is a proper ladder,
if we had $\Delta'=\Delta_{0}$, it would mean there is a segment in $\gotM_r$ linked
with $\Delta_{0}$. Hence, $\Delta_{0} \subsetneq \Delta'$. Now, since $r\not\in J$,
$\gotM_{w(r)}=(\gotM_r^\vee)^{\tau}\neq \gotM_r$ and arguing as above with
$\Delta'$ and $((\Delta')^\vee)^{\tau}$ gives a contradiction.

Thus we have demonstrated the claim.

\subsubsection{}\label{ss_conv_her_prop2}
\begin{claim}\label{claim_ss_conv_her_prop2}
$L=I\cup J$.
\end{claim}
We will prove it by contradiction. Assume,
if possible, that there exists an index in $\{1,\ldots,k_1\}$ for which
$s_{i_1}>1$ and take $i_1$ to be the minimal such index. Let the irreducible
representation $\otimes_{h=1}^{s_{i_1}} \sigma_{(i_1,h)}$ be a component of the
Jacquet module of $\pi_{i_1}$ with respect to a suitable parabolic
subgroup. As supposed earlier for a ladder multi-set, write $\gotM_{i_1} = \Delta_1 + \cdots + \Delta_n$, in a manner such that $\Delta_{j+1} \prec \Delta_j$ for all $1\leq j\leq n-1$. Since
$(\gotM_{i_1}^\vee)^{\tau} = \gotM_{i_1}$, there exists $\sigma' \in \CuspE$
such that $(\sigma')^{\vee}\cong (\sigma')^{\tau}$ and $\gotM_{i_1}$ is
supported on $(\sigma')^{\mathbb Z}$ or
$(\nu^{\frac{1}{2}}\sigma')^{\mathbb Z}$. Also we have
$(\Delta_j^\vee)^{\tau} = \Delta_{n+1-j}$ for all $j$. By \cite[Theorem
2.1]{KL},
\begin{equation*}
\sigma_{(i_1, h)} = L( \sum_{j=1}^n [\nu^{c^h_j}\sigma',\nu^{(c^{h-1}_j-1)}\sigma'])
\end{equation*}
are ladder representations with $c^h_1 > \ldots > c^h_n$ for all $h$ and
$c^{s_{i_1}}_j\leq \ldots\leq c^0_j$ for all $j$. Here
$\{c_{j}^{h}\mid \forall\ h,j\}$ is either a subset of
$ (\frac{1}{2}+\mathbb Z)$ or that of $\Z$, and $c^h_j-c^{h-1}_j\in \mathbb Z$
for all $j,h$. Moreover\footnote{Note that there is no dissonance here, because
  the exponents $c^{h}_{j}$ that we are considering possibly lie in
  $\frac{1}{2}+\Z$, with the definition of segments given in \S
  \ref{s_irr_gln}. Since $c^h_j-c^{h-1}_j+1$ is still an integer, this simply
  means that in that case the segments defining $\sigma_{(i,h)}$ are supported
  on $(\nu^{\frac{1}{2}}\sigma')^{\Z}$ instead of $(\sigma')^{\Z}$.},
$\nu^{(c^0_j -1)}\sigma'= e(\Delta_j)$ and
$(\nu^{c^{s_{i_1}}_j})\sigma'= b(\Delta_j)$.

Let us write $\epsilon_{w_1}(i_1,h) = (\alpha(h), \rho(h))$ for all
$1\leq h\leq s_{i_1}$. Note that by (\ref{eq_gl_mu}) $\alpha$ is injective and
increasing.

We first show that $\alpha(1)> k_{1}$. Assume, if possible, that
$\alpha(1) \leq k_{1}$. By the relation in (\ref{eq_gl_mu}) and the minimality
of $i_{1}$, we get that $\rho(1)=1$. Thus
$\epsilon_{w_1}(i_1,1) = (\alpha(1), 1)$ and
\begin{equation}\label{conv_her_prop2}
((\sigma_{(i_{1},1)})^{\vee})^{\tau}\cong \sigma_{(\alpha(1),1)}.
\end{equation}
Since both $\pi_{i_{1}}$ and $\pi_{\alpha(1)}$ are conjugate self-dual and
$s_{i_{1}}>1$, it is easy to see that the isomorphism in (\ref{conv_her_prop2})
cannot exist (say by matching the central characters of the two
representations). Thus indeed $\alpha(1)> k_{1}$. Since $\alpha$ is increasing,
$\alpha(h)> k_{1}$ for all $h$.

From minimality of $i_1$, we have $i\in L$ for all $1\leq i< i_1$. It then
follows from  \Cref{claim_ss_conv_her_prop1} that $\rho(h)=1$ for all $h$.

Since $\pi_{\alpha(h)}$ is a ladder, we know that
$\nu^{-c^h_1}\sigma' = e(\Delta^h)$ for some $\Delta^h\in
\gotM_{\alpha(h)}$.
Let $\Delta^{h}=[\nu^{a_{h}}\sigma', \nu^{b_{h}}\sigma']$ where
$\{a_{h},b_{h}\mid h=1,\dots, s_{i_{1}}\}$ is either a subset of
$ (\frac{1}{2}+\mathbb Z)$ or that of $\Z$. Also $1-c^{h-1}_1 \geq a_{h}$. In
particular, when $c^h_1<c^{h-1}_1$ the segment $\Delta^h$ is non-trivial.  For
the cases of $c^h_1=c^{h-1}_1$ we will take $\Delta^h$ to be the trivial
segment.

Let us write all indices $1\leq h_1< \ldots < h_t\leq s_{i_1}$ for which
$\Delta^{h_i}$ is non-trivial, arranged by $b_{h_1}< \ldots < b_{h_t}$. Set
$h_{0}=0$. Note that since $\Delta^{h_i}$ ($i\geq 1$) is non-trivial, we have
$c_{1}^{h_{i}-1}=c_{1}^{h_{i-1}}$ and so
\begin{equation}\label{conv_her_prop3}
a_{h_{i}}-1\leq -c_{1}^{h_{i-1}}.
\end{equation}
Since $\alpha$ is injective and $\pi_i$'s are mutually unlinked, no two of the
segments $\Delta^{h_i}$'s should be linked. Therefore we must have
$a_{h_{t}}\leq \ldots \leq a_{h_{1}}$.

Observe that
$e(\Delta^{h_t})= \nu^{-c_{1}^{h_{t}}}\sigma'= \nu^{-c_{1}^{s_{i_{1}}}}\sigma'=
(b(\Delta_{1})^{\vee})^{\tau}=e(\Delta_n)$.
Moreover, $a_{h_{t}}\leq a_{h_{1}}\leq -c^0_1+1$, and
$b(\Delta_{n})= \nu^{-c_{1}^{0}+1}\sigma'$. Thus $\Delta^{h_t}$ contains
$\Delta_{n}$ and has the same end.

In case $n>1$, since $\Delta_n\prec \Delta_{n-1}$, we get that
$\Delta^{h_t}\prec \Delta_{n-1}$. This is a contradiction to the fact that
$\pi_{i_1}$ and $\pi_{\alpha(h_t)}$ are mutually unlinked. This forces $n$ to be
equal to 1.

If $n=1$, then $i_1\not\in J$, and hence,
$\gotM_{i_1}= \gotM_{i_0}=\{\Delta_{0}\}$. Let
$\Delta_{0}=[\nu^{-a}\sigma',\nu^{a}\sigma']$ ($a\in \frac{1}{2}\Z$). Arguing as
above with $\Delta_{0}=\Delta_1 = \Delta_n$, we get that
$\Delta_{0}\subset \Delta^{h_t}$ and $a=b_{h_t}$. Our arrangement of the indices
forces $\alpha(h_t)\leq k_2$. Since $\alpha$ is increasing,
$\alpha(h_1)\leq k_2$ and thus there exists a segment $\Delta'\in \gotM_{h_1}$
such that $\Delta_{0}\subset \Delta'$. By (\ref{conv_her_prop3})
$a_{h_{1}}\leq -a$. If $t>1$, $b_{h_{1}}< b_{h_{t}}= a$. By definition of
$\Delta^{h_{1}}$, $b_{h_{1}}\geq -a-1$. Since $\Delta^{h_1}$ and $ \Delta_{0}$
can't be linked, it follows that $a_{h_1} = -a$. Hence,
$\Delta^{h_1}\subsetneq \Delta_{0} \subset \Delta'$, which gives a contradiction
to the fact that $\gotM_{h_1}$ is a ladder. Thus $t=1$. The condition $n=t=1$
forces $s_{i_1}=1$.

This contradicts the assumption that there is an index $i_{1} \leq k_{1}$ such
that $s_{i_{1}}>1$. This proves our claim that $L=I\cup J$.

\subsubsection{}\label{sec:pf_ch5}
Let $1\leq i\leq k_{1}$. Then by \Cref{claim_ss_conv_her_prop2} we
have that $s_{i}=1$. Let $\epsilon_{w_1}(i,1)= (r,r')$. It now follows from
\Cref{claim_ss_conv_her_prop1} and \Cref{claim_ss_conv_her_prop2} that $r\leq k_{1}$ and
$r'=1$. Thus $\epsilon_{w_1}$ induces a permutation on $I\cup J$ which clearly
preserves the set $I$. Since by assumption $\abs{I}$ is odd, there exists an $i'\in I$ such that $\epsilon_{w_1}(i',1)= (i',1)$. Thus we get that $ \pi_{i'}$ is $H$-distinguished, and since $\pi_{i_{0}}\cong \pi_{i'}$, this finishes the proof of the converse. 
\end{proof}

\subsubsection{}
A simple consequence of the above statement in the following:
\begin{remark}
Proposition \ref{conv_her_prop} provides a host of examples of conjugate self-dual rigid representations that are neither $H$-distinguished nor $(H,\omega)$-distinguished, the simplest one being $L([\nu^{-\frac{1}{2}}],[\nu^{\frac{1}{2}}])\times L([\nu^{-\frac{1}{2}},\nu^{\frac{1}{2}}])$. Thus we obtain a family of counterexamples to Conjecture \ref{jac2} and and hence of Conjecture \ref{rafli2} (using Proposition \ref{equiv}).
\end{remark}

\section{An imprimitive counterexample}\label{sec_imp_ce}
Recall that an irreducible representation of $G_{n}$ is said to be \textit{imprimitive} if it is not parabolically induced from a representation of a proper Levi subgroup. For instance it is known that the set of proper ladder representations are precisely the imprimitive ladder representations (\cite[Theorem 16]{LM}). In view of our results on the proper ladder representations, it is natural to wonder if Conjecture \ref{rafli2} and Conjecture \ref{jac2} might
hold for this smaller class of irreducible representations. We now provide a counterexample demonstrating the absence of such a possibility.
\subsubsection{} We consider an irreducible representation $\theta$ defined as the following:
\begin{defn}\label{imprm_ce}
Let $\sigma\in \CuspE$ be a conjugate self-dual representation. Set
\begin{equation*}
  \fmm=\{[\nu^{\frac{1}{2}}\sigma,\nu^{\frac{3}{2}}\sigma],[\nu^{-\frac{1}{2}}\sigma,\nu^{\frac{7}{2}}\sigma],[\nu^{-\frac{3}{2}}\sigma,\nu^{-\frac{1}{2}}\sigma],[\nu^{-\frac{5}{2}}\sigma,\nu^{\frac{5}{2}}\sigma],[\nu^{-\frac{7}{2}}\sigma,\nu^{\frac{1}{2}}\sigma]\} 
\end{equation*}
and 
\begin{equation*}
\theta=L(\fmm).
\end{equation*}
\end{defn}

\subsubsection{} 
Clearly $\theta$ is conjugate self-dual.  We claim that $\theta$ is imprimitive. Write $\fmm=\fmm_{1}+\fmm_{2}$ where $|\fmm_{1}|\leq 2$. It can be easily seen using \cite[Proposition 4.15(3)]{LM1} or \cite[Lemma 5.21]{LM1} (depending on whether $|\fmm_{1}|$ is $1$ or $2$) that $L(\fmm_{1})\times L(\fmm_{2})$ is always reducible. Thus the representation $\theta$ is imprimitive. 

\subsubsection{} Our claim now follows from the next lemma:
\begin{lemma}\label{lem_imp_ce}
The representation $\theta$ is neither $H$-distinguished nor $(H,\omega)$-distinguished.
\end{lemma}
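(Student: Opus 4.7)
The plan is to argue by contradiction via the geometrical lemma described in \S\ref{gl_mu}. Suppose that $\theta$ is $(H,\omega^{a})$-distinguished for some $a\in\{0,1\}$. Fix the standard-form ordering, for instance $\Delta_2,\Delta_4,\Delta_1,\Delta_5,\Delta_3$ (one checks easily that this ordering is standard), so that $\theta$ appears as the unique irreducible quotient of $\lambda:=L(\Delta_2)\times L(\Delta_4)\times L(\Delta_1)\times L(\Delta_5)\times L(\Delta_3)$, with associated Levi subgroup $M$ of $G_{n}$ (here $n=|\fmm|\cdot\text{length avg}=20$). Invoking the setup of \S\ref{gl_mu}, there exist an element $w_1\in W[M]$, a standard Levi $M(w_1)\subseteq M$, an involution $\epsilon_{w_1}$ on $\fJ_{M(w_1)}$, and an irreducible constituent $\bigotimes_{(i,j)}\sigma_{(i,j)}$ of $r_{M(w_1),M}(\lambda)$ satisfying $\sigma_{\epsilon_{w_1}(i,j)}\cong(\sigma^{\vee}_{(i,j)})^{\tau}$, together with the condition that $\sigma_{(i,j)}$ is $(H,\omega^{a})$-distinguished whenever $(i,j)$ is a fixed point of $\epsilon_{w_1}$.

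The next step is to restrict $\epsilon_{w_1}$ at the coarse block level. Since the only cuspidal supports that can match conjugate-self-dually are those of the paired segments, the pairing must send the $\Delta_2$-block to the $\Delta_5$-block, the $\Delta_1$-block to the $\Delta_3$-block, and preserve the $\Delta_4$-block (which is the unique conjugate self-dual segment). Consequently the splittings of $\Delta_1$ and $\Delta_3$ must mirror each other, the splittings of $\Delta_2$ and $\Delta_5$ must mirror each other, while the splitting of $\Delta_4$ into consecutive sub-segments must itself be conjugate self-dual: its sub-segments are paired by $\epsilon_{w_1}$, with a central sub-segment (when the number of pieces is odd) self-paired and hence required to be $(H,\omega^{a})$-distinguished. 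At this point one uses the explicit Jacquet module of a segment representation and the ordering constraint \eqref{eq_gl_mu} to list the finitely many admissible triples $(M(w_1),\epsilon_{w_1},\bigotimes\sigma_{(i,j)})$.

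The final step is a case-by-case elimination of every such configuration. For each admissible splitting, one checks one of three obstructions: either (i) the required tensor factor does not occur as a subquotient of $r_{M(w_1),M}(\lambda)$ because the ordering condition \eqref{eq_gl_mu} is violated by the enforced matching; or (ii) the central self-paired sub-segment inherited from $\Delta_4$ yields an essentially square-integrable conjugate self-dual representation whose distinction character, computed through $\gamma(\sigma)$ as in \S\ref{sec:def_gamma} together with \Cref{j_ladder}, disagrees with the prescribed $\omega^{a}$; or (iii) the sub-segment decomposition of $\Delta_4$ dictated by the self-pairing overlaps with the sub-segment decomposition of $\Delta_2$ (respectively $\Delta_5$) dictated by the $\Delta_2\leftrightarrow\Delta_5$ pairing in a way that forces two distinct sub-segments in the Jacquet module to share the same cuspidal support with incompatible multiplicities. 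Exhausting these cases produces the desired contradiction.

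The main obstacle is executing the combinatorial case analysis cleanly. The specific choice of $\fmm$, with a length-six central segment $\Delta_4$ framed by length-five segments $\Delta_2,\Delta_5$ and length-two segments $\Delta_1,\Delta_3$ arranged with half-integer exponents, is engineered so that every attempt to pair the splittings of $\Delta_2\leftrightarrow\Delta_5$ and $\Delta_1\leftrightarrow\Delta_3$ compatibly with a self-dual splitting of $\Delta_4$ runs into one of the three obstructions above; verifying this incompatibility uniformly across all orbits $w_1\in W[M]$ is where the technical effort resides.
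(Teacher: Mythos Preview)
Your approach via the geometrical lemma is in principle viable, but the reduction at the ``coarse block level'' is not justified. The constraint coming from \S\ref{gl_mu} is $\sigma_{\epsilon_{w_1}(i,j)}\cong(\sigma_{(i,j)}^\vee)^\tau$, and this is a constraint on \emph{sub-segments}, not on whole blocks. Sub-segments of different $\Delta_i$ overlap heavily in their cuspidal support: for instance $[\nu^{1/2}\sigma,\nu^{3/2}\sigma]$ sits inside each of $\Delta_1,\Delta_2,\Delta_4,\Delta_5$, and its conjugate-dual $[\nu^{-3/2}\sigma,\nu^{-1/2}\sigma]$ sits inside each of $\Delta_3,\Delta_4,\Delta_5$. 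Hence a sub-segment arising from the Jacquet module of $L(\Delta_2)$ may perfectly well be paired by $\epsilon_{w_1}$ with a sub-segment coming from $L(\Delta_1)$, $L(\Delta_3)$, $L(\Delta_4)$, or even another sub-segment of $L(\Delta_2)$. The only structural constraint is the monotonicity condition~\eqref{eq_gl_mu}, which forces distinct sub-segments of a given block to land in \emph{distinct} target blocks in increasing order; this rules out some configurations but certainly does not force the neat $\Delta_2\!\leftrightarrow\!\Delta_5$, $\Delta_1\!\leftrightarrow\!\Delta_3$, $\Delta_4\!\leftrightarrow\!\Delta_4$ picture you assert. Without that reduction the number of admissible triples $(M(w_1),\epsilon_{w_1},\bigotimes\sigma_{(i,j)})$ is far larger than your sketch acknowledges, and you give no argument for eliminating them.

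For comparison, the paper takes an entirely different route. It invokes the result of \cite{AKT} that if a non-generic $\pi$ is $H$-distinguished then $\nu^{1/2}\pi'$ must be $H$-distinguished for some irreducible subquotient $\pi'$ of a derivative of $\pi$. Writing $\theta$ as a subquotient of $L(\gotM_1)\times L(\gotM_2)$ with $\gotM_1,\gotM_2$ ladders, one uses the explicit description of derivatives of ladder representations from \cite{LM} to check that no $\nu^{1/2}$-twist of any irreducible subquotient of a derivative of $L(\gotM_1)\times L(\gotM_2)$ is $H$-distinguished; exactness of the derivative functor then transports this to $\theta$. This bypasses the combinatorial explosion inherent in a direct geometrical-lemma attack on the standard module.
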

\begin{proof}
  Recall that if $\pi\in \IrrE$ is $H$-distinguished and non-generic, then
  $\nu^{\frac{1}{2}}\pi'$ must be $H$-distinguished for at least one irreducible
  subquotient $\pi'$ of one of the derivatives of $\pi$ (see \cite[Lemma
  2.4]{AKT}). Let
\begin{equation*}
  \fmm_{1}=\set{[\nu^{-\frac{1}{2}}\sigma,\nu^{\frac{7}{2}}\sigma],[\nu^{-\frac{5}{2}}
    \sigma,\nu^{\frac{5}{2}}\sigma],[\nu^{-\frac{7}{2}}\sigma,\nu^{\frac{1}{2}}\sigma]}
  \quad
  \text{and} \quad  \fmm_{2}=\set{[\nu^{\frac{1}{2}}\sigma,
    \nu^{\frac{3}{2}}\sigma],[\nu^{-\frac{3}{2}}\sigma,\nu^{-\frac{1}{2}}\sigma]}.
\end{equation*}
By \cite[Proposition 8.4]{Z} $\theta$ is a subquotient of
$L(\fmm_{1})\times L(\fmm_{2})$. Since $\fmm_{1}$ and $\fmm_{2}$ are ladders,
using the Langlands classification version of \cite[Theorem 14]{LM}, it can be
checked easily that no irreducible subquotient of any derivative of
$L(\fmm_{1})\times L(\fmm_{2})$ when twisted by the character
$\nu^{\frac{1}{2}}$ is $H$-distinguished. Since the process of taking
derivatives is an exact functor, the same is true for $\theta$ and thus it is
not $H$-distinguished.

Since $\sigma$ is an arbitrary conjugate self-dual cuspidal representation, twisting it by $\chi_{-1}$ if necessary, we can conclude that $\theta$ is not $(H,\omega)$-distinguished either.
\end{proof}

\end{document}